\theoremstyle{plain}
\newtheorem{theorem}{Theorem}[section]
\newtheorem{lemma}[theorem]{Lemma}
\newtheorem{proposition}[theorem]{Proposition}
\newtheorem{corollary}[theorem]{Corollary}
\theoremstyle{definition}
\theoremstyle{remark}
\newtheorem{remark}[theorem]{Remark}
\def\R{\ensuremath{\mathbb R}}
\def\N{\ensuremath{\mathbb N}}
\def\NN{\ensuremath{\mathbb N}}
\def\PP{\ensuremath{\mathbb P}}
\def\EE{\ensuremath{\mathbb E}}
\def\EEE{\ensuremath{\mathcal E}}
\def\C{\ensuremath{\mathcal C}}
\def\L{\ensuremath{\mathcal L}}
\def\p{\ensuremath{\mathbb P}}
\def\F{\ensuremath{\mathcal F}}
\def\es{{\emptyset}}
\def\sm{\setminus}
\def\eps{\varepsilon}
\DeclareMathOperator{\var}{var}
\DeclareMathOperator{\esss}{ess-sup}
\numberwithin{equation}{section}
\begin{document}
\author[J. Rousseau]{J\'er\^ome Rousseau}
\address{J\'er\^ome Rousseau\\
Departamento de Matem\'atica\\
Universidade Federal da Bahia\\
Avenida Ademar de Barros s/n\\
40170-110 Salvador, BA\\
Brasil} \email{\href{mailto:jerome.rousseau@ufba.br}{jerome.rousseau@ufba.br}}
\urladdr{\url{http://www.sd.mat.ufba.br/~jerome.rousseau/}}

\author[M. Todd]{Mike Todd}
\address{Mike Todd\\ Mathematical Institute\\
University of St Andrews\\
North Haugh\\
St Andrews\\
KY16 9SS\\
Scotland} \email{\href{mailto:m.todd@st-andrews.ac.uk}{m.todd@st-andrews.ac.uk}}
\urladdr{\url{http://www.mcs.st-and.ac.uk/~miket/}}

\thanks{This work was partially supported by the EU-funded Brazilian-European Partnership on Dynamical Systems (BREUDS), FAPESB and CNPQ}

\title{Hitting times and periodicity in random dynamics}

\maketitle

\begin{abstract}
We prove quenched laws of hitting time statistics for random subshifts of finite type. In particular we prove a dichotomy between the law for periodic and for non-periodic points. We show that this applies to random Gibbs measures.
\end{abstract}

\section{Introduction}

The study of hitting time statistics (HTS) for a dynamical system is part of the broader study of recurrence properties in dynamical systems.  The basic idea is that for a dynamical system $f:X\to X$ equipped with some invariant probability measure $\mu$, given a sequence of shrinking sets $(A_n)_n\subset X$, one can look at the distribution of the random variable 
$$\tau_{A_n}(x):=\inf\{k\ge 1:f^k(x)\in A_n\},$$
as $n\to \infty$.  When the system is well-behaved and the sets $(A_n)_n$ are well-chosen, for example if $(X, f)$ is a subshift of finite type, $\mu$ a Bernoulli measure and each $A_n$ is an $n$-cylinder around some $\mu$-typical point $z$, the suitably normalised random variable $\tau$ is exponential, i.e., for each $t>0$,
\begin{equation}
\lim_{n\to \infty}\mu\left(\tau_{A_n}(\cdot)>\frac t{\mu(A_n)} \right)=e^{- t}.
\label{eq:HTS}
\end{equation}
In fact this result holds for many non-uniformly hyperbolic systems and for $(A_n)_n$ chosen to be balls around a typical point $z$ also: see the reviews \cite{Coe00, Sau09, Hay13}.  Remarkably it is not necessary that the system be exponentially mixing (e.g. \cite{BruSauTroVai03, BruVai03, BruTod09, Sau09}), nor indeed for there be any mixing information at all \cite{BruTod09}. 
 Note that sets $(A_n)_n$ which are not simply balls/cylinders shrinking down to some point can lead to very different phenomena \cite{KupLac05, Lac02, DowLac11}. We also remark on the generalisation of these ideas to the related `observational' viewpoint in \cite{Rou14}.

The next question that arises here is: what about non-typical points?  It has been shown in \cite{Hir, FerPol12, FreFreTod12} that if our point $z$ is periodic, then the distribution 
in \eqref{eq:HTS} is of the form $e^{-\Theta t}$ where $\Theta\in(0,1)$ is a parameter which takes into account the amount of repulsion at $z$.  Indeed, in a general subshift of finite type setting in \cite{FerPol12}, and a more restricted setting in \cite{FreFreTod12},  a dichotomy was proved: it was shown that the limit in \eqref{eq:HTS} exists for any $z$,  and is  $e^{-\Theta t}$ for $\Theta\in (0,1)$ in the case that $z$ is periodic, and $\Theta=1$ in the case when $z$ is non-periodic.  Some of these results were motivated by the connection of HTS laws to Extreme Value Laws (EVL), see \cite{Col01, FreFreTod10}, one reason why $\Theta$ can be referred to as the \emph{extremal index}.  

A natural direction to expand this theory is to the realm of random dynamical systems (see \cite{RouSau, MarieRou} for the first study of return times for random systems), so often we think of dynamical systems $\{f_{\omega_0}:X\to X\}$, where $\omega_0$ is chosen randomly from some set. Thus the $n$-th iterate is of the form $f_{\omega_n}\circ f_{\omega_{n-1}}\circ \cdots \circ f_{\omega_1}$.
The randomness can come about in a variety of ways.  The approach of small random perturbations, e.g. for $f_{\omega_0}=f+\omega_0$ for some fixed $f$, was taken in \cite{AytFreVai14}, in which case they think of the randomness as noise.
  Then the authors were able to derive the HTS laws after integrating over all noise: the annealed approach.  There they obtained exponential HTS, using an approach coming from extreme value theory. The same type of results were obtained in \cite{Rou14}, using observations for more general random dynamical systems: for a family of maps $\{f_\omega\}_\omega$, the randomness came from a dynamical system $(\Omega, \theta, \PP)$ and the random orbit is given by $$f_\omega^n(x)=f_{\theta^{n}\omega}\circ f_{\theta^{n-1}\omega}\circ\dots\circ f_\omega(x)$$ with a weaker mixing assumption. It is worth noting that in both papers the law is given with respect to the invariant measure of the associated skew-product.

The quenched approach to this problem is to take a random realisation $\omega\in \Omega$, consider sample-stationary measures $\mu_\omega$ (these measures which will be defined properly in Section~\ref{sec-result}, can be characterized as measures satisfying $({f_\omega})_*\mu_\omega=\mu_{\theta\omega}$ for a.e. $\omega$) and derive results with respect to them.  
In \cite{RouSauVar13}, the authors showed that for some symbolic random dynamical systems with sufficiently good mixing properties, for typical realisation $\omega$ and for typical $z$, 
$$\lim_{n\to \infty}\mu_\omega\left(\tau_{C_n(z)}(\cdot)>\frac t{\mu(C_n(z))} \right)=e^{-t},$$
where $C_n(z)$ denotes the random $n$-cylinder around $z$.  The most complete results were proved for random subshifts of finite type on a finite alphabet.  We note that they could also integrate over the randomness to achieve an annealed law.

In this paper, we synthesise the random approach of  \cite{RouSauVar13} with the approach to the extremal index in \cite{FreFreTod12}, so that while we assume typical realisations $\omega$, we are considering particular periodic points $z$.  We prove that we still obtain a non-trivial extremal index, even in this random case.  Indeed we prove a dichotomy: either the random HTS law is $e^{-\Theta t}$ for $\Theta\in (0, 1)$ and the point $z$ is periodic, or the HTS law is $e^{-t}$.  We apply this to some random Gibbs measures, including the countable alphabet case, which we prove satisfy the mixing properties we require.

Note that the appearance of periodic points in our random shifts is very natural.  In the most elementary example, all the shifts are the same subshift of finite type and the randomness comes from the measures on them (e.g. the full shift on two symbols with $(\omega, 1-\omega)$-Bernoulli measure), so periodic points appear as usual.  Even in much more complicated cases, periodic strings of symbols still appear: and when they don't, our dichotomy implies that $\Theta =1$.

We prove the dichotomy of the HTS law for random subshifts of finite type in Section \ref{sec-proof}, while the settings is described and the principal results are enunciated in Section \ref{sec-result}. To ease the reading of the proof in the random setting, the proof of the HTS law for periodic point in the deterministic case is given in Section \ref{sec-det}. In Section \ref{sec:Theta ex}, the problem of the existence of the extremal index is adressed. Finally, we apply our results to a family of random subshifts with random Gibbs measures in Section \ref{sec:examples}.

\section{Hitting time statistics dichotomy for random subshifts}\label{sec-result}

Let $(\Omega,\theta,\PP)$ be an invertible ergodic measure preserving system, set $X=\NN^{\NN_0}$ and let
$\sigma: X \to X$ denote the shift. Let $A=\left\{A(\omega)=(a_{ij}(\omega)):\omega\in \Omega\right\}$ be a random transition matrix, i.e., for any $\omega\in\Omega$, $A(\omega)$ is an $\N\times \N$-matrix with entries in $\{0,1\}$, at least one non-zero entry in each row and each column and such that $\omega\mapsto a_{ij}(\omega)$ is measurable for any $i\in\NN$ and $j\in\NN$.  
For any $\omega\in \Omega$
define
\begin{equation*}
X_\omega =\{x=(x_0,x_1,\ldots)\colon x_i\in \N \text{ and } a_{x_i x_{i+1}}(\theta^i\omega)=1\text{ for all } i\in\NN\}
\end{equation*}
and
\begin{equation*}
\EEE = \{(\omega,x)\colon \omega\in\Omega,x\in X_\omega\} \subset \Omega\times X.
\end{equation*}
We consider the random dynamical system coded by the skew-product $S : \EEE \to \EEE$ given by
$S(\omega,x)= (\theta \omega,\sigma x)$.  
While we allow infinite alphabets here, we nevertheless call $S$ a \emph{random subshift of finite type} (SFT).  Assume that $\nu$ is an $S$-invariant probability
measure with marginal $\PP$ on $\Omega$.  Then we let $(\mu_\omega)_\omega$ denote
its decomposition on $X_\omega$, that is, $d\nu(\omega,x)=d\mu_\omega(x)d\PP(\omega)$. 
The measures $\mu_\omega$ are called the \emph{sample measures}.  Note $\mu_\omega(A)=0$ if $A\cap X_\omega=\es$. We denote by $\mu=\int \mu_\omega \, d\PP$ the marginal of $\nu$ on $X$.

For $y\in X$ we denote by $C_n(y)=\{z \in X : y_i=z_i \text{ for all } 
0\le i\le n-1\} $ the  \emph{$n$-cylinder} that contains $y$. Let $\F_0^n$ be the sigma-algebra in $X$ 
generated by all the $n$-cylinders.

We assume the following: there are constants $h_0>0$, $c_0>0$ and a summable function $\psi$ such that
for all $m,n,g\in \N$, $A\in\F_0^n$ and $B\in \F_0^{m}$: 

\begin{itemize}
\item[(I)]  the marginal measure $\mu$ satisfies
\[
\left|\mu(A\cap\sigma^{-g-n}B) -\mu(A)\mu(B)\right|\le \psi(g);
\]
\item[(II)] for $\PP$-almost every $\omega\in\Omega$, if $y\in X_\omega$ and $n\ge 1$ then  
  $c_0^{-1} e^{-h_0 n}\leq \mu(C_n(y))$; 
\item[(III)] for $\PP$-almost every $\omega\in\Omega$,
\[
\left|\mu_\omega(A\cap\sigma^{-g-n}B) -\mu_\omega(A)\mu_{\theta^{n+g}\omega}(B)\right|\le \psi(g)\mu_\omega(A)\mu_{\theta^{n+g}\omega}(B);
\]

\item[(IV)] the sample measures satisfy
\[\underset{\omega\in\Omega}\esss\sup_{x\in X}\mu_\omega(C_1(x))<1.\]
\end{itemize}

First of all, we give a result on the measure of cylinders, following \cite{GS}, that will be used several times in our paper.
\begin{lemma} For a random SFT such that assumptions (III) and (IV) hold, there exist $c_1, c_2>0$ and $h_1>0$ such that for any $y\in X$, $n\ge 1$ and $m\ge 1$, for $\PP$-almost every $\omega\in\Omega$
\begin{equation}\label{measurecyl}
 \mu_\omega(C_n(y))\le c_1 e^{-h_1 n} 
 \end{equation}
and
\begin{equation}\label{measuresumcyl}
\sum_{k=m}^{n}\mu_\omega(C_n(y)\cap \sigma^{-k}C_n(y))\le c_2 e^{-h_1 m}\mu_\omega(C_n(y)). \end{equation}
\end{lemma}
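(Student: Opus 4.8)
The plan is to first read off from (IV) the constant $\rho:=\esss\sup_{\omega\in\Omega}\sup_{x\in X}\mu_\omega(C_1(x))<1$, and then, since the summable function $\psi$ satisfies $\psi(g)\to 0$, to fix once and for all an integer $g$ with $\lambda:=(1+\psi(g))\rho<1$. Because (III) and the bound $\sup_x\mu_\omega(C_1(x))\le\rho$ each hold on a full-measure set, I would carry out all estimates on the forward $\theta$-invariant full-measure set $\Omega_0:=\bigcap_{j\ge 0}\theta^{-j}\Omega_0'$, where $\Omega_0'$ is the intersection of those two full-measure sets; for $\omega\in\Omega_0$ every estimate below is then simultaneously available at every $\theta^j\omega$ with $j\ge 0$.

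For \eqref{measurecyl} I would peel off one symbol at a time across a gap of length $g$. Since $C_n(y)\subseteq C_1(y)\cap\sigma^{-(1+g)}C_{n-1-g}(\sigma^{1+g}y)$ whenever $n\ge 1+g$, assumption (III) applied with $A=C_1(y)\in\F_0^1$ and $B=C_{n-1-g}(\sigma^{1+g}y)$ gives
\[\mu_\omega(C_n(y))\le(1+\psi(g))\,\mu_\omega(C_1(y))\,\mu_{\theta^{1+g}\omega}(C_{n-1-g}(\sigma^{1+g}y))\le\lambda\,\mu_{\theta^{1+g}\omega}(C_{n-1-g}(\sigma^{1+g}y)).\]
Iterating $j:=\lfloor n/(1+g)\rfloor$ times (legitimate because $\Omega_0$ is forward $\theta$-invariant) and bounding the leftover cylinder measure by $1$ yields $\mu_\omega(C_n(y))\le\lambda^{j}\le\lambda^{-1}e^{-h_1 n}$ with $h_1:=\log(1/\lambda)/(1+g)>0$, so \eqref{measurecyl} holds with $c_1:=\lambda^{-1}$ (the cases $n<1+g$ being covered by the same chain of inequalities).

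For \eqref{measuresumcyl} I would assume $m\le n$ and split the sum at $k=g$. The terms with $m\le k\le g$ are each bounded crudely by $\mu_\omega(C_n(y))$, hence together by $g\,\mu_\omega(C_n(y))$; since that range forces $m\le g$, we may write $g\,\mu_\omega(C_n(y))\le g\,e^{h_1 g}\,e^{-h_1 m}\,\mu_\omega(C_n(y))$. For $g<k\le n$ the two copies $C_n(y)$ and $\sigma^{-k}C_n(y)$ genuinely overlap, so rather than inserting a mixing gap between them I would keep $C_n(y)$ whole and retain only the ``right tail'' of the second cylinder: a point of $C_n(y)\cap\sigma^{-k}C_n(y)$ has its coordinates on $[n+g,\,n+k-1]$ prescribed by $y$, whence $C_n(y)\cap\sigma^{-k}C_n(y)\subseteq C_n(y)\cap\sigma^{-(n+g)}C_{k-g}(z)$ for a suitable word $z$ of length $k-g$. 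Applying (III) (with $A=C_n(y)$, $B=C_{k-g}(z)$, gap $g$) and then \eqref{measurecyl} at the base point $\theta^{n+g}\omega$ gives
\[\mu_\omega\big(C_n(y)\cap\sigma^{-k}C_n(y)\big)\le(1+\psi(g))\,c_1\,e^{h_1 g}\,e^{-h_1 k}\,\mu_\omega(C_n(y)),\]
and summing over $k\ge m$ bounds the contribution of this range by $\frac{(1+\psi(g))c_1 e^{h_1 g}}{1-e^{-h_1}}\,e^{-h_1 m}\,\mu_\omega(C_n(y))$. Taking $c_2:=g\,e^{h_1 g}+\frac{(1+\psi(g))c_1 e^{h_1 g}}{1-e^{-h_1}}$ then finishes the proof.

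The only genuinely delicate point is this reduction for \eqref{measuresumcyl}: the instinct is to separate the two copies of $C_n(y)$ by a mixing gap, which is impossible when $k\le n$; the fix is to discard the overlap and keep only the length-$(k-g)$ tail of $\sigma^{-k}C_n(y)$, since it is precisely this shortened block, whose measure is $\le c_1e^{-h_1(k-g)}$ by \eqref{measurecyl}, that supplies the summable factor $e^{-h_1 k}$. Everything else is bookkeeping with constants, together with the routine observation that the estimates must be taken on the $\theta$-invariant full-measure set $\Omega_0$ so that they are available at all the shifted base points $\theta^j\omega$ occurring in the iterations.
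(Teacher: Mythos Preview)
Your proof is correct and is precisely the ``straightforward adaptation to the random setting of the proof of Lemma~1 in [GS]'' that the paper invokes in lieu of a detailed argument. The peeling-off-with-gap argument for \eqref{measurecyl} and the tail-retention trick for \eqref{measuresumcyl} (keeping only the length-$(k-g)$ block of the second cylinder that sits to the right of the gap) are exactly the ideas behind the Galves--Schmitt lemma, transported to the fibred setting via (III); the paper does not spell this out.
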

\begin{proof} A straightforward adaptation to the random setting of the proof of Lemma 1 in \cite{GS} gives the result. \end{proof}

Our first result works only in the case of finite alphabet since assumption (II) cannot be fulfilled otherwise. A stronger mixing assumption for the marginal measure $\mu$ will allow us to treat the case of infinite alphabets.

\begin{theorem}\label{thprin}
Assume  (I)--(IV) hold and that there exists a constant $q>2\frac{h_0}{h_1}$ such that $\psi$ satisfies $\psi(g)g^q\to0$ as $g\to+\infty$. Let $z\in X$.  Then for $\PP$-almost every $\omega$, 
 either
\begin{itemize}
\item[(a)] $z$ is a periodic point of period $p$ and if the limit $\Theta:=\lim_{n\to \infty}\frac{\mu\left(C_n(z)\sm C_{n+p}(z)\right)}{\mu(C_n(z))}$ exists, then for all $t\ge 0$ we have
$$\lim_{n\to \infty}\mu_\omega\left( \tau_{C_n(z)}(\cdot)>\frac t{\mu(C_n(z))} \right)=e^{-\Theta t};$$
or
\item[(b)]
for all $t\ge 0$ we have
$$\lim_{n\to \infty}\mu_\omega\left( \tau_{C_n(z)}(\cdot)>\frac t{\mu(C_n(z))} \right)=e^{-t}.$$
\end{itemize}
\label{thm:main1}
\end{theorem}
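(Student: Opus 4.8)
The plan is to reduce the random (quenched) statement to a Poisson-approximation scheme of the type used in \cite{FreFreTod12}, adapting it to handle the skew-product structure. The starting point is the classical observation that, writing $A_n=C_n(z)$ and $r=r(n)=\lfloor t/\mu(A_n)\rfloor$, one has
\[
\mu_\omega\bigl(\tau_{A_n}(\cdot)>r\bigr)=\mu_\omega\Bigl(\bigcap_{j=0}^{r-1}\sigma^{-j}(X\setminus A_n)\Bigr),
\]
and the aim is to show this converges to $e^{-\Theta t}$. The standard route is a block argument: partition $\{0,\dots,r-1\}$ into roughly $t/(\mu(A_n)\,\Delta)$ blocks of length $\Delta=\Delta(n)$ with $\Delta\mu(A_n)\to0$ but $\Delta$ still large enough that the mixing assumption (III) (with the summability of $\psi$ quantified by the hypothesis $\psi(g)g^q\to0$, $q>2h_0/h_1$) makes the blocks asymptotically independent. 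The quenched nature forces care: each time we mix across a gap of length $g$ between a block over $X_\omega$ and the next one, assumption (III) replaces $\mu_\omega$ on the later block by $\mu_{\theta^{n+g}\omega}$, so one accumulates a product of sample measures along the orbit $\omega,\theta^{\cdot}\omega,\dots$ rather than a single measure; the key point is that the error in (III) is \emph{multiplicative} and controlled by $\psi(g)$, so these substitutions are harmless after summation, and the measures of the relevant events can all be compared to the marginal $\mu$ via (I) and (II).

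The heart of the matter, and the source of the dichotomy, is the analysis of a single block, i.e. of the quantity
\[
\mu_\omega\Bigl(\bigcap_{j=0}^{\Delta-1}\sigma^{-j}(X\setminus A_n)\Bigr)
= 1-\mu_\omega(A_n)+\text{(corrections from returns within the block).}
\]
By inclusion–exclusion and Lemma~\ref{measuresumcyl} (the random Gibbs--S{\'e}nizergues-type cylinder estimate), the only return times that contribute in the limit are the short recurrences of $A_n$ into itself. For a \emph{non-periodic} $z$, $C_n(z)\cap\sigma^{-k}C_n(z)=\es$ for every fixed $k$ once $n$ is large, so there is no short recurrence, the block probability is $1-\mu_\omega(A_n)+o(\mu(A_n))$, and one gets the pure exponential $e^{-t}$ — case (b). For a \emph{periodic} $z$ of period $p$, the relevant overlaps are exactly $C_n(z)\cap\sigma^{-jp}C_n(z)=C_{n+(\text{stuff})}(z)$; summing the corresponding geometric-type series produces the factor $1-\Theta$ where $\Theta$ is the limit in the statement, yielding $e^{-\Theta t}$ — case (a). Concretely one shows $\mu_\omega\bigl(A_n\setminus\bigcup_{k\ge1}\sigma^{-k}A_n\bigr)/\mu_\omega(A_n)\to\Theta$, again transferring between $\mu_\omega$ and $\mu$ using (I)–(III), so that the escape rate per block is $\Theta\mu(A_n)(1+o(1))$.

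Putting the block estimate together with the block-independence estimate gives
\[
\mu_\omega\bigl(\tau_{A_n}>r\bigr)=\bigl(1-\Theta\mu(A_n)(1+o(1))\bigr)^{t/\mu(A_n)(1+o(1))}+o(1)\longrightarrow e^{-\Theta t},
\]
with $\Theta=1$ in the non-periodic case. I expect the main obstacle to be bookkeeping the quenched mixing: since the deterministic proof of \cite{FreFreTod12} (reproduced in Section~\ref{sec-det}) handles a single measure, one must verify that the orbit of sample measures $(\mu_{\theta^k\omega})_k$ appearing after repeated use of (III) can be uniformly controlled — this is where assumption (IV), the Lemma, and the fact that all these measures are comparable on cylinders to the single marginal $\mu$ via (I)–(II) are essential — and that the choice of $\Delta=\Delta(n)$ and the gap $g=g(n)$ (e.g. $\Delta\approx\mu(A_n)^{-1/2}$, $g\approx$ a slowly growing function) is compatible with the quantitative decay rate $q>2h_0/h_1$, exactly as in the hypothesis. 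The `$\PP$-almost every $\omega$' statement then follows because the convergence holds for the fixed marginal $\mu$ and the correction terms are bounded by sums of $\psi$ that are deterministic, so no exceptional set of $\omega$'s beyond those from assumptions (II)–(III) is introduced.
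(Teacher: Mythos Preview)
Your block-decomposition outline is a reasonable alternative to the paper's approach (the paper uses a one-step recursion in the spirit of \cite{hsv, Sau09} rather than blocks), but there is a genuine gap at the quenched step, and it is exactly the place where you hand-wave.

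After repeated application of (III) you correctly note that what emerges is a product over the orbit of sample measures, schematically $\prod_k\bigl(1-\Theta_k\,\mu_{\theta^{j_k}\omega}(A_n)\bigr)$. You then jump to $\bigl(1-\Theta\mu(A_n)(1+o(1))\bigr)^{t/\mu(A_n)}$, justifying this by saying the sample measures are ``comparable on cylinders to the single marginal $\mu$ via (I)--(II)''. That comparability is not available: (I) concerns only the marginal $\mu$, and (II) is a lower bound on $\mu(C_n)$, not on $\mu_\omega(C_n)$; nothing in the hypotheses forces $\mu_{\theta^k\omega}(A_n)/\mu(A_n)$ to be uniformly bounded or even to converge (the paper's Remark after Corollary~\ref{cor:main1} gives explicit $\omega$'s where this fails). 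Likewise, your claim that $\mu_\omega(A_n')/\mu_\omega(A_n)\to\Theta$ for the sample measures is unwarranted; $\Theta$ is defined via the marginal only.

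What is actually needed is a law-of-large-numbers statement: that the \emph{sum} $M_n(\omega)=\sum_{i=1}^{k_n}\mu_{\theta^i\omega}(A_n')$ converges to $\Theta t$ for $\PP$-a.e.\ $\omega$, even though the individual summands fluctuate. The paper obtains this (Lemma~\ref{lem:MN}) by a second-moment computation: one bounds $\var M_n$ using (III) near the diagonal and (I) far from it, and it is precisely here that the quantitative condition $\psi(g)g^q\to 0$ with $q>2h_0/h_1$ enters, to make $\sum_n\var M_n<\infty$ and invoke Borel--Cantelli. This probabilistic step cannot be replaced by a deterministic bound, and your final sentence (``the correction terms are bounded by sums of $\psi$ that are deterministic, so no exceptional set of $\omega$'s \dots\ is introduced'') is therefore incorrect: the main term itself is random, and controlling it is the heart of the quenched proof. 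If you insert an analogous variance/Borel--Cantelli argument for the sum of your block contributions, your scheme becomes viable; without it, the proposal does not close.
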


This dichotomy can be compared with both the dichotomy for deterministic systems, see for example \cite{Ab} or \cite[Theorem A]{AytFreVai14}, and to the typical case for sample measures \cite[Theorem 1]{RouSauVar13}. Analogously to Corollary 2 of the latter paper we easily obtain the following annealed law.

\begin{corollary}\label{coroprin}
Assume (I)--(IV) hold and that there exists a constant $q>2\frac{h_0}{h_1}$ such that $\psi$ satisfies $\psi(g)g^q\to0$ as $g\to+\infty$.  Then for $z\in X$, either 
\begin{itemize}
\item[(a)] $z$ is a periodic point of period $p$ and if the limit $\Theta:=\lim_{n\to \infty}\frac{\mu\left(C_n(z)\sm C_{n+p}(z)\right)}{\mu(C_n(z))}$ exists, then for all $t\ge 0$ we have
$$\lim_{n\to \infty}\mu\left( \tau_{C_n(z)}(\cdot)>\frac t{\mu(C_n(z))} \right)=e^{-\Theta t};$$
or
\item[(b)]
for all $t\ge 0$ we have
$$\lim_{n\to \infty}\mu\left( \tau_{C_n(z)}(\cdot)>\frac t{\mu(C_n(z))} \right)=e^{-t}.$$
\end{itemize}
\label{cor:main1}
\end{corollary}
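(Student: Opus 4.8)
The corollary follows from Theorem~\ref{thm:main1} by integrating over $\omega\in\Omega$, exactly as Corollary~2 of \cite{RouSauVar13} follows from Theorem~1 there. The key point is that the sample measures $\mu_\omega$ already give the limit $e^{-\Theta t}$ (or $e^{-t}$) for $\PP$-almost every $\omega$, and the marginal $\mu = \int \mu_\omega\, d\PP$ is defined by integrating the $\mu_\omega$. So the first step is simply to fix $z\in X$, apply Theorem~\ref{thm:main1} to obtain a full-measure set $\Omega_z\subset\Omega$ on which the appropriate conclusion ((a) or (b)) holds, and note that which alternative holds is determined purely by whether $z$ is periodic --- a property of $z$ alone, not of $\omega$ --- so the dichotomy (a)/(b) is the same for all $\omega\in\Omega_z$.

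**Passing to the integral.** Next I would write, for each fixed $t\ge 0$ and each $n$,
\[
\mu\left(\tau_{C_n(z)}(\cdot)>\frac t{\mu(C_n(z))}\right) = \int_\Omega \mu_\omega\left(\tau_{C_n(z)}(\cdot)>\frac t{\mu(C_n(z))}\right)\,d\PP(\omega),
\]
which is valid because $\nu$ decomposes as $d\nu(\omega,x)=d\mu_\omega(x)\,d\PP(\omega)$ and the event $\{\tau_{C_n(z)}>t/\mu(C_n(z))\}$ is a measurable subset of $X$ (note the threshold $t/\mu(C_n(z))$ involves only the marginal measure, hence is a constant independent of $\omega$). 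The integrand is bounded by $1$ uniformly in $n$ and $\omega$, and by Theorem~\ref{thm:main1} it converges pointwise, for $\PP$-a.e.\ $\omega$, to $e^{-\Theta t}$ in case (a) and $e^{-t}$ in case (b). By the bounded convergence theorem the integral converges to the same limit, which gives the stated annealed law.

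**The main point to check.** There is essentially no analytic obstacle here --- the work is all in Theorem~\ref{thm:main1}. The one thing that genuinely needs care is the measurability of $\omega\mapsto \mu_\omega\bigl(\tau_{C_n(z)}>t/\mu(C_n(z))\bigr)$, so that the integral above makes sense and bounded convergence applies; this is standard from the disintegration $d\nu = d\mu_\omega\,d\PP$ (the map $\omega\mapsto\mu_\omega(B)$ is measurable for each fixed Borel $B\subset X$), but it should be mentioned. One should also note that the exceptional $\PP$-null set produced by Theorem~\ref{thm:main1} may a priori depend on $t$; to handle all $t\ge 0$ simultaneously one takes a countable dense set of values of $t$, gets a single full-measure $\Omega_z$, and then uses monotonicity in $t$ of $\mu_\omega(\tau_{C_n(z)}>t/\mu(C_n(z)))$ together with continuity of $t\mapsto e^{-\Theta t}$ to upgrade to all $t\ge 0$ --- but since we integrate out $\omega$ anyway, it suffices that the convergence holds for $\PP$-a.e.\ $\omega$ for each fixed $t$, which is exactly what Theorem~\ref{thm:main1} provides. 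Hence the corollary follows.
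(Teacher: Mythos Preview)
Your proposal is correct and follows exactly the paper's approach: the paper simply states that ``Corollary~\ref{cor:main1} follows immediately by integrating over $\omega$'' (end of Section~\ref{sec-proof}.1), and your argument is precisely this, fleshed out with the disintegration identity and the bounded convergence theorem. Your additional remarks on measurability and on the handling of the $t$-dependence of the exceptional set are accurate but go beyond what the paper spells out.
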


\begin{remark}
\begin{enumerate}
\item 
In both of the above results, for the standard examples the limit $\Theta$ does indeed exist, see Section~\ref{sec:Theta ex}.
\item In line with \cite{AytFreVai14}, and in contrast to \cite{RouSauVar13}, in (III) we require quite strong mixing properties.  As we demonstrate in Section~\ref{sec:examples}, these are satisfied by standard examples.

\item Our results are stated for almost all $\omega$.  We note that firstly $\mu_\omega$ is only assured to exist for a full measure set of $\omega$, so outside this set we cannot claim anything.  However, consider \cite[Examples 19, 20]{RouSauVar13} where $\Omega=X=\{0,1\}^{\N_0}$ with $\mu_{\omega}[x_0, \ldots, x_n]=p_{x_0}(\omega)p_{x_1}(\theta\omega)\cdots p_{x_n}(\theta^n\omega)$ where 
\begin{equation*}
p_{x_i}(\omega)=\begin{cases} p(\omega) & \text{ if }  x_i=0,\\
1-p(\omega) & \text{ if }  x_i=1,
\end{cases}
\end{equation*}
and 
\begin{equation*}
p(\omega)=\begin{cases} \alpha & \text{ if }  \omega_0=0,\\
\beta & \text{ if }  \omega_0=1,
\end{cases}
\end{equation*}
for $\alpha, \beta\in (0,1)$.  Then for any periodic point $z$, one can choose non-typical $\omega$ so that  
the relevant $\Theta$ in the law $e^{-\Theta t}$ exists, but is not $$\lim_{n\to \infty}\frac{\mu\left(C_n(z)\sm C_{n+p}(z)\right)}{\mu(C_n(z))},$$ namely by choosing $0$s in $\omega$ with a non-typical frequency.  Also we can choose $\omega$ such that there is no limit $\Theta$ by choosing $\omega$ with suitably long strings of 0s followed by much longer strings of 0s etc., which also gives non-existence of a law for typical $z$.  Note that this is principally due to the failure of Lemma~\ref{lem:MN} for non-typical $\omega$.

\item 
Many of the ideas here work beyond the random SFT setting, but we will restrict ourselves to the SFT case, both for expository purposes and to give a clear connection to straightforward applications.

\end{enumerate}
\end{remark}

Allowing infinite alphabets, one cannot hope to verify assumption (II) but the result in Theorem~\ref{thprin} is still satisfied replacing assumption (I) by assumption (I'):
\begin{itemize}
\item[(I')]  the marginal measure $\mu$ satisfies
\[
\left|\mu(A\cap\sigma^{-g-n}B) -\mu(A)\mu(B)\right|\le  \psi(g)\mu(A)\mu(B).
\] 
\end{itemize}

\begin{theorem}\label{theoinfalph}
If (I'), (III) and (IV) hold and there exists a constant $q>2\frac{h_0}{h_1}$ such that $\psi$ satisfies $\psi(g)g^q\to0$ as $g\to+\infty$, then the conclusions of Theorem~\ref{thprin} and Corollary~\ref{coroprin} are satisfied.
\end{theorem}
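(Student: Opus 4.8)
The plan is to revisit the proof of Theorem~\ref{thprin} given in Section~\ref{sec-proof} and to check that it survives the replacement of (I) by (I') together with the removal of (II). The first observation is that (I') is \emph{stronger} than (I): since $\mu(A),\mu(B)\le 1$ we have $\psi(g)\mu(A)\mu(B)\le\psi(g)$, so (I') $\Rightarrow$ (I). Hence under the hypotheses of Theorem~\ref{theoinfalph} all of (I), (III), (IV) hold, and the only assumption from Theorem~\ref{thprin} that is genuinely unavailable is (II) --- which is exactly the assumption that cannot hold for an infinite alphabet, since $\sum_i\mu(C_1(i))=1$ forces $\inf_i\mu(C_1(i))=0$.

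So the task reduces to locating every use of (II) in the proof of Theorem~\ref{thprin} and showing that, with (I') in hand, it is no longer needed. In a hitting-time argument of this shape one fixes $t>0$, works with $N=N(n)\approx t/\mu(C_n(z))$, partitions a length-$N$ orbit stretch into blocks separated by gaps of length $g=g(n)\to\infty$, decorrelates the blocks using the mixing hypothesis, and controls the short-return terms near $z$ via \eqref{measurecyl} and \eqref{measuresumcyl}. The decorrelation of the sample measures is carried out with (III) and involves only $\mu_\omega$; the marginal assumption enters when the accumulated error is estimated. With the additive bound (I) this error is of the form $(\text{number of blocks})\times(\text{number of relevant }n\text{-cylinders})\times\psi(g)$, and it is precisely (II) --- which bounds the number of blocks by $t/\mu(C_n(z))\le t c_0 e^{h_0 n}$ --- together with the upper bound \eqref{measurecyl} on the cylinder count that forces the balance encoded in the condition $q>2h_0/h_1$. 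Under the multiplicative bound (I'), however, each application of the mixing estimate to sets lying in $\F_0^n$ already carries a factor $\mu(C_n(z))$ (or a product of cylinder measures dominated by it), so the error takes the form $(\text{number of blocks})\times\mu(C_n(z))\times(\cdots)\times\psi(g)\lesssim t\,(\cdots)\,\psi(g)$, where $(\cdots)$ is a sum over return positions still controlled by \eqref{measuresumcyl}; this tends to $0$ under the same hypothesis $\psi(g)g^q\to0$ with $q>2h_0/h_1$, and \emph{no} lower bound on $\mu(C_n(z))$ is used. Everything else in the proof --- the reduction to a (compound, in the periodic case) Poisson limit, the identification of the parameter $\Theta$, and the normalisation by $\mu(C_n(z))$ rather than $\mu_\omega(C_n(z))$ --- relies only on (III), (IV) and the Lemma, none of which refer to the size of the alphabet, so it is unchanged; and the annealed statement follows by integrating over $\omega$ exactly as in the proof of Corollary~\ref{coroprin}.

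The main obstacle is the bookkeeping of this audit: one must be sure that (II) is invoked \emph{only} as a substitute for the additivity of (I) in the error terms, and is not hidden elsewhere --- for instance in the passage between $\mu_\omega(C_n(z))$ and $\mu(C_n(z))$, or in the analysis of repulsion at a periodic $z$ --- and one must check that in each appearance of the mixing estimate the relevant sets genuinely lie in $\F_0^n$ (after enlargement if necessary), so that (I') can legitimately be applied and the resulting cylinder-measure factors telescope correctly rather than accumulating an extra exponential in $n$. Once this is verified, Theorem~\ref{theoinfalph} is immediate.
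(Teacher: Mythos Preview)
Your approach is the same as the paper's: observe that (I') $\Rightarrow$ (I), locate the uses of (II), and show that (I')'s multiplicative factor of $\mu(A)$ absorbs the $k_n$'s that (II) was needed to bound. However, your audit is imprecise. The hypothesis (II) is \emph{not} used in the block/gap decomposition (Lemmas~\ref{lem:somme}--\ref{lem:GNas}); those lemmas use only (III), \eqref{measurecyl}--\eqref{measuresumcyl}, and Lemma~\ref{lem:MN}. The sole appearance of (II) in the whole proof is inside the variance estimate for $M_n$ in Lemma~\ref{lem:MN}, where it bounds $k_n^2\psi(m-n-p)$ via $k_n\le c_0te^{h_0n}$ so that $\sum_n\var M_n<\infty$. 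The paper therefore simply redoes the far-from-diagonal part of that variance computation with (I') in place of (I), picking up factors of $\mu(A')$ so that the dangerous terms become $k_n\mu(A')\psi(m-n-p)$ and $k_n^2\mu(A')^2\psi(m-n-p)$; since $k_n\mu(A')\le t$, these are summable without any lower bound on $\mu(C_n(z))$. Your proposal would be complete if you replaced the generic ``blocks/gaps/accumulated error'' narrative by this single explicit computation.
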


\section{Hitting time statistics of periodic points for deterministic dynamical systems}\label{sec-det}

Before proving Theorem~\ref{thm:main1}, we will provide a proof in the deterministic case for periodic points.  While the result is already known by \cite{FreFreTod12}, the proof uses the framework of \cite{hsv} (we will use the presentation of that in \cite{Sau09}) which is the way we will finally prove our main theorem, so the following section can be seen as a warm-up.

Suppose that $X$ carries a Riemannian metric.  Let $f:X\to X$ be a measurable map, locally differentiable at some periodic point $z\in X$ with an ergodic invariant probability measure $\mu$.

We say that we have decay of correlations against $L^1$ if the following holds.
Let $C$ be some Banach space, $\varphi\in \C$ and $\psi\in L^1(\mu)$.  We require
$$\frac1{\|\varphi\|_{\C}\|\psi\|_{L^1}}\left|\int\varphi(\psi\circ f^n)~d\mu-\int\varphi~d\mu\int\psi~d\mu\right|\to 0$$
as $n\to \infty$.
As in, for example \cite{AytFreVai14}, we will assume decay of correlations against $L^1$ with the space $\C$ containing characteristic functions on balls and having a constant $K$ so that $\|1_A\|_{\C}<K$ for all balls $A$.

\begin{theorem}
Suppose that $(X, f,\mu)$ has decay of correlations against $L^1$ of at least rate $n^{-(1+\eps)}$, for some $\eps>0$.  Then for $z$ a $p$-periodic point, if the limit $\Theta:=1-\lim_{r\to 0} \frac{\mu(f^{-p}(B_r(z))\cap B_r(z))}{\mu(B_r(z))}$ exists then
$$\lim_{r\to 0}\mu\left(\tau_{B_r(z)}(\cdot)>\frac t{\mu(B_r(z))} \right)=e^{-\Theta t}.$$
\label{thm:main det}
\end{theorem}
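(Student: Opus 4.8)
The plan is to follow the Ornstein–Weiss / Hirata blueprint as presented in \cite{Sau09}: show that the hitting-time random variable, suitably normalised, converges in distribution to an exponential by controlling the number of returns to the target set via a Stein–Chen / moment argument, with the extremal index $\Theta$ entering through the repulsion at the periodic orbit. Concretely, for a ball $A=B_r(z)$ write $N(A) := \sum_{k=1}^{\lfloor t/\mu(A)\rfloor} 1_A\circ f^k$ for the number of visits to $A$ up to time $t/\mu(A)$, and recall that $\mu(\tau_A>t/\mu(A))=\mu(N(A)=0)$. The standard strategy is to prove that $N(A)$ converges in law to a Poisson-type variable; here, because $z$ is $p$-periodic and hence returns to a neighbourhood of itself cluster in blocks, $N(A)$ should instead converge to a compound Poisson law whose zero-probability is $e^{-\Theta t}$, with $\Theta$ the asymptotic escape rate $1-\lim_r \mu(f^{-p}(A)\cap A)/\mu(A)$.

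The key steps I would carry out, in order, are as follows. First, split the orbit segment $\{1,\dots,\lfloor t/\mu(A)\rfloor\}$ into long blocks separated by gaps, the gap length $g=g(r)$ chosen so that $g\mu(A)\to 0$ but $g$ still grows fast enough to exploit decay of correlations; this is the standard blocking argument. Second, estimate the contribution of short returns: the only way to return to $A=B_r(z)$ in fewer than some fixed number of steps is essentially to shadow the periodic orbit, so using local differentiability of $f$ at $z$ one shows $\mu(A\cap f^{-jp}A)\approx \mu(A\cap f^{-p}A)\cdot(\text{geometric factor})$ and, crucially, that returns of period not a multiple of $p$ have negligible measure; summing the geometric series produces the factor $\Theta=1-\lim_r\mu(f^{-p}A\cap A)/\mu(A)$. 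Third, replace $1_A$ by $1_{A\setminus f^{-p}A}$ (the `annulus' of points that genuinely escape), so that visits to this thinned set no longer cluster, and show $\mu(\tau_A>t/\mu(A))$ and $\mu(\tau_{A\setminus f^{-p}A}>\Theta t/\mu(A))$ have the same limit; here one needs $\mu(A\setminus f^{-p}A)/\mu(A)\to\Theta$. Fourth, apply decay of correlations against $L^1$ (at rate $n^{-(1+\eps)}$) with $\varphi=1_{A\setminus f^{-p}A}$ or products of such indicators — using that $\C$ contains characteristic functions of balls with uniformly bounded norm — to decouple the blocks and show the block-return events are asymptotically independent, each block contributing $1-\Theta\mu(A)\cdot(\text{block length})+o(\mu(A)\cdot\text{block length})$ to the survival probability; multiplying over the $\sim 1/(\text{block length}\cdot\mu(A))$ blocks gives $e^{-\Theta t}$ in the limit. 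The summability of correlations ensures the accumulated error from the decoupling, which is of order $(\text{number of pairs of blocks})\times(\text{gap decay})$, vanishes.

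The main obstacle I expect is step three combined with the short-return estimate: one must show that, apart from the periodic shadowing that is packaged into $\Theta$, there is no anomalous clustering — i.e. $\sum_{k=k_0}^{\text{small}}\mu(A\cap f^{-k}A)=o(\mu(A))$ for $k$ not a multiple of $p$ and for $k_0$ large but fixed — which in the deterministic metric setting requires a quantitative use of the local expansion/contraction of $f$ near $z$ (controlling $\mu(B_r(z)\cap f^{-k}B_r(z))$ in terms of $r$ and the derivative $Df^k$ along the orbit), rather than the pure cylinder combinatorics available symbolically. A secondary technical point is calibrating the two scales $g(r)$ and the block length against the correlation rate $n^{-(1+\eps)}$ so that both the decoupling error and the boundary/gap error tend to zero; this is routine but must be done carefully. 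Once the deterministic argument is in place, the later sections replace decay of correlations against $L^1$ by the mixing hypotheses (I)–(IV), replace balls by random cylinders $C_n(z)$, and carry everything through for $\PP$-a.e.\ $\omega$, with the role of the local-differentiability short-return estimate taken over by Lemma~\ref{measurecyl}–\ref{measuresumcyl}.
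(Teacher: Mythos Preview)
Your proposal outlines a valid route, but it is \emph{not} the one the paper takes, and in fact is not quite the route of \cite{Sau09} either, despite your opening claim. What you describe is a blocking/Stein--Chen argument aimed at a compound Poisson limit for the occupation count $N(A)$; this is closer in spirit to \cite{Hir} or the EVL machinery of \cite{FreFreTod12,AytFreVai14}. The paper instead follows the \cite{hsv,Sau09} scheme much more directly: it sets $A'=B_r(z)\setminus f^{-p}B_r(z)$, introduces the quantity
\[
\delta(A,A')=\sup_{j\ge p}\left|\tfrac{\mu(A')}{\mu(A)}\mu(\tau_A>j)-\mu_A(\tau_A>j)\right|,
\]
and proves by a one-line recursion (Lemma~\ref{lem:exp approx}) that
\[
\left|\mu(\tau_A>n)-(1-\mu(A'))^{n-p}\mu(\tau_A>p)\right|\le \tfrac{\mu(A)}{\mu(A')}\,\delta(A,A').
\]
The entire analytic content is then reduced to showing $\delta(A,A')\to 0$ (Lemma~\ref{lem:delta0}), which is done with three elementary approximations and a single choice of time scale $j\asymp \mu(A)^{-\gamma}$ with $1>\gamma>1/\beta$; no blocking, no gap calibration between two scales, and no compound Poisson structure is needed. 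The short-return obstacle you flag is handled in one line: since $z$ is repelling, any $x\in A'$ must exit the repulsion domain before re-entering $A$, forcing $\tau_A>\alpha\log(1/r)$, and the correlation sum $\sum_{k\ge \alpha\log(1/r)} k^{-\beta}$ is then directly summable.

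What each approach buys: your blocking scheme generalises naturally to full point-process convergence (not just the zero-mass) and makes the compound Poisson structure explicit, but at the cost of juggling two scales and verifying a $D'$-type anti-clustering condition. The paper's $\delta(A,A')$ approach is shorter and technically lighter---the extremal index enters automatically through $\mu(A')/\mu(A)\to\Theta$ in the product $(1-\mu(A'))^{n}$, with no separate clustering analysis---and this is precisely the framework that is then lifted verbatim to the random setting in Section~\ref{sec-proof}, with $\delta_\omega(A,A')$ replacing $\delta(A,A')$.
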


This can be interpreted in the same way as \cite[Proposition 2]{FreFreTod12} (see also \cite[Section 3]{AytFreVai14}), but we present a proof here to clarify our use of the approach described in \cite{Sau09}.

We will now prove an analogue of \cite[Lemma 41]{Sau09}.
Given sets $A, A'\subset X$, where $\mu(A)>0$, define 
$$\delta(A, A'):=\sup_{j\geq p}\left|\frac{\mu(A')}{\mu(A)}\mu(\tau_A>j)-\mu_A(\tau_A>j)\right|.$$
In applications, $A'\subset A$, so $\frac{\mu(A')}{\mu(A)}\le 1$.  In fact we will take sequences $(A_n, A_n')_n$ with measures going to zero where $A'$ will be called the \emph{escaping set} of $A$ which will mean that for any fixed $p\in N$, there is $n(p)$ such that $n\ge n(p)$ implies that $\tau_{A_n}>p$.

\begin{lemma}
For any $n>p$,
$$\left|\mu(\tau_A>n)-(1-\mu(A'))^{n-p}\mu(\tau_A>p)\right|\le \frac{\mu(A)}{\mu(A')}\delta(A, A').$$
\label{lem:exp approx}
\end{lemma}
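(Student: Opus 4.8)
The plan is to set $H_n:=\mu(\tau_A>n)$, derive a one-step recursion of the form $H_n\approx(1-\mu(A'))H_{n-1}$ with an explicit error term, and then iterate it down from $n$ to $p$, collecting the errors into a geometric series.

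First I would record the set identity $\{\tau_A>n\}=f^{-1}\bigl(A^c\cap\{\tau_A>n-1\}\bigr)$: indeed the orbit of $x$ avoids $A$ at all times $1,\ldots,n$ exactly when $f(x)\notin A$ and the orbit of $f(x)$ avoids $A$ at all times $1,\ldots,n-1$. Applying the $f$-invariance of $\mu$, and then splitting off the part of $\{\tau_A>n-1\}$ lying in $A$, gives
$$\mu(\tau_A>n)=\mu\bigl(A^c\cap\{\tau_A>n-1\}\bigr)=\mu(\tau_A>n-1)-\mu(A)\,\mu_A(\tau_A>n-1).$$
For $n-1\ge p$, the definition of $\delta(A,A')$ lets me replace $\mu(A)\,\mu_A(\tau_A>n-1)$ by $\mu(A')\,\mu(\tau_A>n-1)$ at the cost of an error of absolute value at most $\mu(A)\,\delta(A,A')$, so that
$$H_n=\bigl(1-\mu(A')\bigr)H_{n-1}+R_n,\qquad |R_n|\le \mu(A)\,\delta(A,A'),$$
for every $n\ge p+1$.

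Iterating this identity from $n$ down to $p$ yields
$$H_n=\bigl(1-\mu(A')\bigr)^{n-p}H_p+\sum_{j=p+1}^{n}\bigl(1-\mu(A')\bigr)^{n-j}R_j,$$
and bounding the error sum by $\mu(A)\,\delta(A,A')\sum_{i\ge 0}\bigl(1-\mu(A')\bigr)^{i}=\tfrac{\mu(A)}{\mu(A')}\,\delta(A,A')$ gives the stated estimate. The argument is essentially routine once the recursion is identified; the only subtlety is that the substitution via $\delta(A,A')$ is legitimate only when the running index is at least $p$, which is precisely why the iteration terminates at level $p$ and why $\mu(\tau_A>p)$ — rather than $\mu(\tau_A>0)=1$ — appears in the statement. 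I do not expect a real obstacle here: the substantive work, namely controlling $\delta(A_n,A_n')$ for the relevant shrinking sets via the decay-of-correlations hypothesis, is carried out separately.
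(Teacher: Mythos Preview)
Your proposal is correct and follows essentially the same approach as the paper: both derive the one-step recursion $\mu(\tau_A>k+1)=(1-\mu(A'))\mu(\tau_A>k)+O(\mu(A)\delta(A,A'))$ for $k\ge p$ via the identity $\mu(\tau_A>k+1)=\mu(\tau_A>k)-\mu(A\cap\{\tau_A>k\})$ (the paper routes this through $\mu(\tau_A=k+1)$, you through $\mu(A)\mu_A(\tau_A>k)$, but these are the same quantity), and then iterate down to level $p$, bounding the accumulated error by a geometric series summing to $\mu(A)/\mu(A')\cdot\delta(A,A')$.
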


\begin{proof}
We will use an inductive argument starting, for $k\ge  p$, with the estimate
\begin{align*}
\left|\mu(\tau_A>k+1)-(1-\mu(A'))\mu(\tau_A>k)\right|& =\left| \mu(A')\mu(\tau_A>k)-\mu(\tau_A=k+1)\right|\\
& =\left|\mu(A')\mu(\tau_A>k)-\mu(A\cap\{\tau_A>k\})\right|\\
&\le \mu(A)\delta(A, A'),
\end{align*}
where the second equality follows from (1) in \cite{Sau09}.
Then, for $n>p$,
\begin{align*}
\left|\mu(\tau_A>n)-(1-\mu(A'))^{n-p}\mu(\tau_A>p)\right|& \le \sum_{k=0}^{n-p-1}(1-\mu(A'))^k \mu(A)\delta(A, A')\\
&\leq\frac{\mu(A)}{\mu(A')}\delta(A,A'),
\end{align*}
as required.
\end{proof}

\begin{lemma}\label{lem:delta0}
For $z$ a $p$-periodic point, $A=B_r(z)$  and $A'=B_r(z)\setminus f^{-p}B_r(z)$, we have
$$\delta(A, A')\underset{r\rightarrow0}\rightarrow 0.$$
\end{lemma}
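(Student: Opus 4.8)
The plan is to estimate $\delta(A,A')=\sup_{j\geq p}\left|\frac{\mu(A')}{\mu(A)}\mu(\tau_A>j)-\mu_A(\tau_A>j)\right|$ directly, splitting the range of $j$ into a ``short'' part $p\leq j\leq g$ and a ``long'' part $j>g$ for a suitable mixing-gap parameter $g=g(r)\to\infty$ (to be chosen at the end so that both the mixing error and the error terms below vanish). For the long part, I would write $\mu_A(\tau_A>j)=\frac{1}{\mu(A)}\mu(A\cap\{\tau_A>j\})$ and decompose the event $\{\tau_A>j\}$ near time $0$: the orbit either avoids $A$ in its first $p$ steps as well, or it hits $A$ at some time $1\le i\le p$. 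Since $z$ is $p$-periodic, for $r$ small the only way to be in $A=B_r(z)$ at a time $i\le p$ and then avoid $A$ afterwards forces $i=p$ and the point to lie in $f^{-p}A\cap A$; iterating, the dominant contribution is that the orbit stays in $A$ for a while (a geometric factor in $\mu(f^{-p}A\cap A)/\mu(A)$) and then lands in $A\setminus f^{-p}A=A'$. Making this precise, one gets $\mu(A\cap\{\tau_A>j\})\approx \mu(A')\,\mu(\tau_A>j-p)$ up to a controlled error, and combined with decay of correlations (to replace $\mu(A\cap\{\tau_{A}>k\})$-type quantities by products, with error $\lesssim \|1_A\|_\C\,\mu(\{\tau_A>k\})\cdot(\text{corr}(g))$ using the $n^{-(1+\eps)}$ rate) this yields the bound on the long part. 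In effect this is exactly the computation underlying Lemma~\ref{lem:exp approx}, run in reverse.

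For the short part $p\le j\le g$, both $\mu_A(\tau_A>j)$ and $\mu(\tau_A>j)$ are close to $1$ for small $r$ once $g$ is fixed (for $\mu$ trivially since $\mu(\tau_A>j)\ge 1-j\mu(A)\to 1$; for $\mu_A$, because conditioned on starting in $B_r(z)$, the periodicity of $z$ means the orbit can only return to $B_r(z)$ within $j$ steps through the thin set $f^{-kp}A\cap A$, whose conditional measure $\mu(f^{-kp}A\cap A)/\mu(A)$ stays bounded away from $1$ but, more importantly, the relevant difference is controlled by $\delta$ on a shorter range plus a correlation term). Meanwhile $\frac{\mu(A')}{\mu(A)}\to 1-(1-\Theta)=\Theta$ is bounded, so the short-range contribution to $\delta(A,A')$ is $O(g\,\mu(A))+o(1)$. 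Choosing $g=g(r)\to\infty$ slowly enough that $g\,\mu(A)\to0$ while $g\cdot\mathrm{corr}(g)\to0$ (possible since the correlation rate is summable, $n^{-(1+\eps)}$), both pieces go to zero.

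The main obstacle will be making the near-time-zero decomposition rigorous and uniform in $j$: one must show that for $r$ small, the ``bad'' returns to $B_r(z)$ in the first $p$ steps really are captured by the periodic nesting $f^{-p}B_r(z)\cap B_r(z)\supset f^{-2p}B_r(z)\cap B_r(z)\supset\cdots$, i.e.\ that no spurious short returns occur — this uses local differentiability of $f$ at $z$ (so that $f^p$ acts as an approximate linear map on $B_r(z)$ with a well-defined derivative, forcing $\mu(f^{-p}B_r(z)\cap B_r(z))/\mu(B_r(z))$ to converge, which is where the hypothesis that $\Theta$ exists enters) — and then to control the resulting geometric-type series $\sum_{k\ge1}\big(\mu(f^{-p}A\cap A)/\mu(A)\big)^k$ uniformly in $r$, which is fine as long as that ratio is bounded below $1$, guaranteed by $\Theta>0$; the degenerate case $\Theta=0$ (or non-existence) is excluded by hypothesis. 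Once this combinatorial-geometric bookkeeping is in place, the correlation estimates are routine and the choice of $g(r)$ closes the argument.
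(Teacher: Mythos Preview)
Your plan is in the right spirit but misses the key simplification that collapses most of the combinatorics. For $j\ge p$, the identity
\[
A\cap\{\tau_A>j\}=A'\cap\{\tau_A>j\}
\]
holds \emph{exactly}, since any $x\in A\setminus A'=A\cap f^{-p}A$ has $f^p(x)\in A$ and hence $\tau_A(x)\le p\le j$. There is no geometric series to sum, no nesting $f^{-kp}A\cap A$ to track, and no need for $\Theta>0$ (indeed the lemma does not even assume $\Theta$ exists). Once you have this identity, the problem reduces to decorrelating $A'$ from $E_j:=\{\tau_A>j\}$, and this can be done \emph{uniformly in $j\ge p$} with a single gap parameter $g$: replace $E_j$ by $f^{-g}E_{j-g}$ (cost $\mu(A'\cap\{\tau_A\le g\})$), apply decay of correlations across the gap (cost $CK\,g^{-(1+\eps)}$), and replace $E_{j-g}$ back by $E_j$ (cost $\mu(A')\mu(\tau_A\le g)$). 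Your split of the range of $j$ into short and long parts is then superfluous.

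The second point you mislocate is the role of local differentiability (i.e.\ the repelling nature) of $z$. It is not used to make $\Theta$ exist; it is used to control the first cost above, namely $\mu_A(A'\cap\{\tau_A\le g\})$. Since $f^p$ is locally a dilation near $z$, any $x\in A'$ (which by definition has $f^p(x)\notin A$) cannot return to $A$ until it has left the domain of repulsion, giving $\tau_A(x)\ge \alpha\log(1/r)$ on $A'$. One then bounds $\frac{1}{\mu(A)}\sum_{k=\lfloor\alpha\log(1/r)\rfloor}^{g}\mu(A'\cap f^{-k}A)$ term by term via decay of correlations. Without this lower bound on $\tau_A$ restricted to $A'$, the short-return contribution is not obviously small, and your proposed handling of it (``controlled by $\delta$ on a shorter range plus a correlation term'') is circular. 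After these two corrections your outline coincides with the paper's proof; the choice $g\sim\mu(A)^{-\gamma}$ with $\frac{1}{1+\eps}<\gamma<1$ makes all three costs vanish.
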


\begin{proof} Let the correlations decay rate be $n^{-\beta}$: we will insert $\beta=1+\eps$ when we need to make the final estimates.
We follow the argument of \cite[Theorem 40]{Sau09}, which essentially means providing an analogue of Lemma 45 of that paper.  We assume that $r$ is so small that $B_r(z), f(B_r(z)),\ldots, f^{p-1}(B_r(z))$ are disjoint. We write $E_n:=\{\tau_A> n\}$ and assume $n\geq p$.  Then we are interested in 
\begin{align*}
\left|\mu_A(E_n)-\frac{\mu(A')}{\mu(A)}\mu(E_n)\right|&=\frac1{\mu(A)}\left|\mu(A\cap E_n)-\mu(A')\mu(E_n)\right|\\
&=\frac1{\mu(A)}\left|\mu(A'\cap E_n)-\mu(A')\mu(E_n)\right|
\end{align*}
since every point in $A\sm A'$ has $\tau_A=p\le n$.  Given $j\le n$, we make some approximations: 
$$\left|\mu(A'\cap E_n)-\mu(A'\cap T^{-j}E_{n-j})\right|\le \mu(A'\cap\{\tau_A\le j\}),$$
$$\left|\mu(A'\cap T^{-j}E_{n-j})-\mu(E_{n-j})\mu(A')\right|\le C\|1_{A'}\|_{\C_2}\mu(E_{n-j})j^{-\beta }<CK j^{-\beta},$$
and
$$\left|\mu(T^{-j}E_{n-j})-\mu(E_n)\right|\le \mu(\tau_A\le j).$$
Then
\begin{align*}
\left|\mu_A(E_n)-\frac{\mu(A')}{\mu(A)}\mu(E_n)\right|& \le \mu_A(A'\cap\{\tau_A\le j\})+\frac{CK}{\mu(A)j^{\beta}}+ \frac{\mu(A')}{\mu(A)}\mu(\tau_A\le j).
\end{align*}

This estimate is still satisfied when $j> n$, indeed
\begin{eqnarray*}
\left|\mu_A(E_n)-\frac{\mu(A')}{\mu(A)}\mu(E_n)\right|&\leq& \left|\frac{\mu(A')}{\mu(A)}-\mu_A(E_n)\right|+\left|\frac{\mu(A')}{\mu(A)}-\frac{\mu(A')}{\mu(A)}\mu(E_n)\right| \\
&\leq &\mu_A(A'\cap\{\tau_A< n\})+ \frac{\mu(A')}{\mu(A)}\mu(\tau_A< n)\\
&\leq & \mu_A(A'\cap\{\tau_A\le j\})+ \frac{\mu(A')}{\mu(A)}\mu(\tau_A\le j).
\end{eqnarray*}

Thus, for any $j\in \N$, we get
\begin{equation}\label{eqdeltaaa'}
\delta(A,A') \le \mu_A(A'\cap\{\tau_A\le j\})+\frac{CK}{\mu(A)j^{\beta}}+ \frac{\mu(A')}{\mu(A)}\mu(\tau_A\le j).
\end{equation}

Let us choose $j=\left\lceil\frac{1}{\mu(A)^\gamma}\right\rceil$ where $1>\gamma>\frac{1}{\beta}$. Then $j\mu(A)$ and $1/(j^\beta\mu(A))$ are both small in $\mu(A)$, so in particular the second term goes to zero as $r$ goes to zero. 

For the first term, since $z$ is a repelling periodic point, there exists $\alpha>0$ so that $f^p$ is behaving locally, in the domain of repulsion, like its linearisation $y\mapsto e^{\alpha p}y$.  Hence if $x\in A'$, then $x$ must leave the domain of repulsion of $z$ before it can return to $A$, giving $\tau_A> \alpha \log (1/r)$.  
Therefore,
\begin{align*}
\mu_A(A'\cap\{\tau_A\le j\}) &\le \frac1{\mu(A)}\sum_{k=-\alpha \log r}^j \mu(A'\cap f^{-k}A)\\
&\le \mu(A')(j+\alpha\log r)+\frac{C\|1_{A}\|_{\C} \mu(A')}{\mu(A)}\sum_{k=\lfloor-\alpha \log r\rfloor}^j k^{-\beta}\\
&\le j\mu(A')+CK\frac{1}{\lfloor\alpha \log(1/ r)\rfloor^{\beta+1}}.
\end{align*}

Clearly the second term decays to zero. For the first term we note that $\mu(A')\le \mu(A)$ and we chose $j$ so that $j\mu(A)$  decays to 0 with $r$.

Finally we note that, in \eqref{eqdeltaaa'}, 
$\mu(\tau_A\le j)  \le j\mu(A)$
which decays to zero as above, so the lemma is proved.
\end{proof}

\begin{proof}[Proof of Theorem~\ref{thm:main det}]
Let $t>0$ and $n=\left\lfloor\frac{t}{\mu(A)}\right\rfloor$. Using Lemma~\ref{lem:exp approx},
\begin{eqnarray*}
\left|\mu(\tau_A>n)-e^{-\Theta t}\right|&\leq& \frac{\mu(A')}{\mu(A)}\delta(A, A')+\left|(1-\mu(A'))^{n}-e^{-\Theta t}\right|\\
& &+(1-\mu(A'))^{n-p}\left|\mu(\tau_A>p)-(1-\mu(A'))^{p}\right|.
\end{eqnarray*}
Since, as $r\rightarrow 0$, we have $\mu(A')/\mu(A)\rightarrow1/\Theta$, $\mu(\tau_A>p)\rightarrow 1$ and $(1-\mu(A'))^{p}\rightarrow 1$, and since one can prove that $(1-\mu(A'))^{n}$ converges to $e^{-\Theta t}$, the theorem is proved using Lemma~\ref{lem:delta0}.
\end{proof} 

We close this section with a discussion of the interpretation of $\Theta$.  Note that in the deterministic SFT case, with finite alphabet and a Gibbs measure, this is dealt with in \cite[Lemma 6.1]{Hir}.  Let $\phi:X\to \R$ be a sufficiently regular (e.g.\ H\"older) potential with $P(\phi)=0$, such that $\mu$ is an equilibrium state and $m$ is the corresponding $\phi$-conformal measure.  We use the notation $S_n\phi(x)=\phi(x)+\phi\circ\sigma(x)+\ldots+\phi\circ\sigma^{n-1}(x)$.

\begin{lemma}
If $\frac{d\mu}{dm}\in (0,\infty)$ then 
$\lim_{r\to 0}\frac{\mu(B_r(z)\sm f^{-p}B_r(z))}{\mu(B_r(z))}=\Theta$ where $\Theta=1-e^{S_p\phi(z)}$.
\label{lem:ann scale}
\end{lemma}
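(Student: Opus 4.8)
The plan is to compute the local scaling of $\mu(B_r(z) \setminus f^{-p}B_r(z))$ against $\mu(B_r(z))$ by passing through the conformal measure $m$ and exploiting the conformality relation along the periodic orbit. Since $\frac{d\mu}{dm}$ is bounded above and below, for the purpose of computing the limiting ratio it suffices to prove the statement with $\mu$ replaced by $m$; indeed writing $\rho = \frac{d\mu}{dm}$, continuity (or at least essential continuity at $z$, which follows from the Gibbs property) of $\rho$ lets one replace $\mu(E)$ by $\rho(z)\, m(E)$ up to a multiplicative $(1+o(1))$ factor for any measurable $E \subset B_r(z)$ as $r \to 0$, and the $\rho(z)$ cancels in the ratio. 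Strictly, since we only assume $\frac{d\mu}{dm}\in(0,\infty)$, one should be slightly more careful: one uses that $\mu \ll m \ll \mu$ with Radon--Nikodym derivatives bounded, so $m(B_r(z) \setminus f^{-p}B_r(z))/m(B_r(z))$ and $\mu(B_r(z)\setminus f^{-p}B_r(z))/\mu(B_r(z))$ have the same limit points; I will phrase the argument so that it is really the conformal-measure computation that carries the content.

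The key computation is the following. Fix $r$ small enough that $B_r(z), f(B_r(z)), \dots, f^{p-1}(B_r(z))$ are disjoint and $f^p$ is injective on $B_r(z)$ (possible since $z$ is a repelling periodic point). The $\phi$-conformal relation gives, for any Borel set $E$ on which $f^p$ is injective,
\begin{equation*}
m(f^p(E)) = \int_E e^{-S_p\phi}\, dm,
\end{equation*}
equivalently $m(E) = \int_{f^p(E)} e^{S_p\phi \circ (f^p|_E)^{-1}}\, dm$. Apply this with $E = B_r(z) \cap f^{-p}B_r(z)$, the set of points in $B_r(z)$ that return to $B_r(z)$ after $p$ steps. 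Because $z$ is $p$-periodic and $f^p$ is locally an expanding diffeomorphism near $z$, the image $f^p(E)$ is exactly $B_r(z) \cap f^p(B_r(z))$, which is comparable to $B_r(z)$: more precisely, $f^p(B_r(z)) \supset B_r(z)$ for $r$ small (expansion), so $f^p(E) = B_r(z)$. Hence
\begin{equation*}
m(B_r(z) \cap f^{-p}B_r(z)) = \int_{B_r(z)} e^{S_p\phi \circ (f^p|_E)^{-1}}\, dm.
\end{equation*}
By continuity of $S_p\phi$ (H\"older regularity of $\phi$) and the fact that $(f^p|_E)^{-1}(B_r(z))$ shrinks to $z$, the integrand converges uniformly to $e^{S_p\phi(z)}$; therefore $m(B_r(z)\cap f^{-p}B_r(z)) = (e^{S_p\phi(z)} + o(1))\, m(B_r(z))$. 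Consequently
\begin{equation*}
\frac{m(B_r(z)\setminus f^{-p}B_r(z))}{m(B_r(z))} = 1 - \frac{m(B_r(z)\cap f^{-p}B_r(z))}{m(B_r(z))} \xrightarrow[r\to 0]{} 1 - e^{S_p\phi(z)},
\end{equation*}
and transferring back to $\mu$ via the bounded Radon--Nikodym derivative gives $\Theta = 1 - e^{S_p\phi(z)}$, which matches the definition of $\Theta$ from Theorem~\ref{thm:main det}.

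The main obstacle is controlling the geometry of $f^p(B_r(z))$ and $f^{-p}B_r(z)\cap B_r(z)$ precisely enough: one needs that $f^p$ restricted to the relevant piece of $B_r(z)$ is an injective (bi-Lipschitz, indeed $C^1$) map whose image fills up $B_r(z)$, so that the change-of-variables identity for the conformal measure can be applied cleanly and the leftover error terms genuinely vanish. This is where local differentiability and repulsion at $z$ enter, exactly as in the $\delta(A,A')$ estimate of Lemma~\ref{lem:delta0}; the linearisation $y \mapsto e^{\alpha p}y$ near $z$ shows $f^p(B_r(z)) \supseteq B_r(z)$ for small $r$ and that the inverse branch contracts at rate $e^{-\alpha p}$, which both justifies $f^p(E) = B_r(z)$ and gives a quantitative rate $o(1) = O(e^{-\alpha p}r)^{\text{(H\"older exponent)}}$ for the convergence of the integrand. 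A secondary technical point is that with only $\frac{d\mu}{dm}\in(0,\infty)$ assumed (rather than continuity of the density), one must argue that the density may be taken essentially continuous at $z$ — which holds for Gibbs/equilibrium states by the standard bounded-distortion argument — or else content oneself with showing the limit of the $\mu$-ratio lies in the interval pinned down by the $m$-ratio and the density bounds, which still suffices if one knows the limit exists.
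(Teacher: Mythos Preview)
Your proposal is correct and follows exactly the route the paper indicates: first establish the limit for the conformal measure $m$ via the conformality relation and continuity of $S_p\phi$ (this is the content of \cite[Lemma 3.1]{FreFreTod12}, to which the paper simply defers), then pass to $\mu$ using the bounded density. Your explicit computation---using repulsion at $z$ to get $f^p(E)=B_r(z)$ for $E=B_r(z)\cap f^{-p}B_r(z)$, applying the inverse-branch change of variables, and noting $S_p\phi\circ(f^p|_E)^{-1}\to S_p\phi(z)$ uniformly---is precisely what underlies the referenced lemma, and your caveat about needing essential continuity of $d\mu/dm$ at $z$ (automatic for Gibbs/equilibrium states) is the right reading of the hypothesis.
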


\begin{proof}
This follows as in \cite[Lemma 3.1]{FreFreTod12} where the lemma is shown for the conformal measure $m$, then adding in the invariant measure via bounded density.
\end{proof}

\section{Proof of the quenched law}\label{sec-proof}

From here on we will be in the random SFT setting described in Section~\ref{sec-result}.  
Our proof of Theorem~\ref{thm:main1} follows the line of \cite{Sau09, RouSauVar13}.

\subsection{Quenched law for cylinders}

Since $\theta$ is invertible, by $\sigma$-invariance of $\nu$ and almost everywhere uniqueness of the 
decomposition $d\nu=d\mu_\omega \, d\mathbb{P} $, the set $$\Omega' := \{\omega\in\Omega\colon \forall i,\ (\sigma^i)_*\mu_\omega=\mu_{\theta^i\omega}\}$$ has full $\mathbb P$-probability.

Given $z\in X$, sets $z\in A'\subset A\subset X$ and $\omega\in \Omega$, consider
\[
\delta_\omega(A, A') = \delta_{z, \omega}(A, A')
	:=\sup_{j\ge p} \left|\mu_\omega(A')\mu_\omega(\tau_A(\cdot)>j) - \mu_\omega(A\cap \{\tau_A(\cdot)>j\})\right|
\]
where $p$ is the period of $z$ if $z$ is a periodic point, and $p=0$ otherwise.

\begin{lemma} \label{lem:delta}
Let $z\in X$.  If $z$ is periodic, set $p$ to be the period; otherwise set $p=0$.  For all $\omega\in\Omega'$, integers $k>p$ and measurable $A'\subset A\subset X$,
\begin{align*}
\left|\mu_\omega(\tau_A>k)-\prod_{i=1}^{k-p}\left(1-\mu_{\theta^i\omega}(A')\right)\mu_{\theta^{k-p}\omega}(\tau_A>p)\right|
\\
&\hspace{-3cm}\le
\sum_{i=1}^{k-p}\delta_{\theta^i\omega}(A,A')\prod_{j=1}^{i-1} \left(1-\mu_{\theta^j\omega}(A')\right).
\end{align*}
\end{lemma}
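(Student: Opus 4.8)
The statement is the quenched analogue of Lemma~\ref{lem:exp approx}, and I would prove it by the same inductive scheme; the only new feature is that the base point of the sample measure is shifted by $\theta$ at each step.

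First I would record the random version of identity (1) of \cite{Sau09}. Since $\tau_A$ depends only on $\sigma$ and $A$, one has the set identity $\{\tau_A>n+1\}=\sigma^{-1}\!\big(\{\tau_A>n\}\cap A^c\big)$ for every $n\ge0$; hence, for $\omega\in\Omega'$, pushing forward by $\sigma$ and using $(\sigma)_*\mu_\omega=\mu_{\theta\omega}$,
\[
\mu_{\omega}(\tau_A>n+1)=\mu_{\theta\omega}\!\big(\{\tau_A>n\}\cap A^c\big)=\mu_{\theta\omega}(\tau_A>n)-\mu_{\theta\omega}\!\big(A\cap\{\tau_A>n\}\big).
\]
Applying this with $\omega$ replaced by $\theta^{i-1}\omega$ (which again satisfies $(\sigma)_*\mu_{\theta^{i-1}\omega}=\mu_{\theta^i\omega}$, directly from the definition of $\Omega'$) and with $n=k-i$, and setting $b_i:=\mu_{\theta^i\omega}(\tau_A>k-i)$ for $0\le i\le k-p$, this becomes the recursion $b_{i-1}=b_i-\mu_{\theta^i\omega}\big(A\cap\{\tau_A>k-i\}\big)$.

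Next I would insert the correlation errors. For $1\le i\le k-p$ we have $k-i\ge p$, so $\mu_{\theta^i\omega}\big(A\cap\{\tau_A>k-i\}\big)$ is one of the quantities controlled by the supremum defining $\delta_{\theta^i\omega}(A,A')$; writing it as $\mu_{\theta^i\omega}(A')\,b_i+\eta_i$ with $|\eta_i|\le\delta_{\theta^i\omega}(A,A')$, the recursion becomes
\[
b_{i-1}=\big(1-\mu_{\theta^i\omega}(A')\big)\,b_i+\eta_i,\qquad 1\le i\le k-p,
\]
with each factor $1-\mu_{\theta^i\omega}(A')$ lying in $[0,1]$ because $A'\subset X$ and $\mu_{\theta^i\omega}$ is a probability measure. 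Iterating this from $i=1$ to $i=k-p$ (equivalently, an induction on the number of steps, exactly as in Lemma~\ref{lem:exp approx}) gives
\[
b_0=\prod_{i=1}^{k-p}\big(1-\mu_{\theta^i\omega}(A')\big)\,b_{k-p}+\sum_{i=1}^{k-p}\Big(\prod_{j=1}^{i-1}\big(1-\mu_{\theta^j\omega}(A')\big)\Big)\eta_i.
\]
Since $b_0=\mu_\omega(\tau_A>k)$, $b_{k-p}=\mu_{\theta^{k-p}\omega}(\tau_A>p)$ and $\prod_{j=1}^{i-1}(1-\mu_{\theta^j\omega}(A'))\in[0,1]$, taking absolute values and bounding $|\eta_i|\le\delta_{\theta^i\omega}(A,A')$ yields exactly the asserted inequality.

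I do not anticipate a genuine obstacle: once the first display is in place the rest is telescoping together with careful bookkeeping of the $\theta$-shifts. The one point requiring attention is the range $1\le i\le k-p$, which is precisely what forces $k-i\ge p$, so that $\delta_{\theta^i\omega}(A,A')$, being a supremum over $j\ge p$, dominates the error at step $i$; this is also where the hypothesis $k>p$ enters. The SFT structure plays no role here beyond the relation $(\sigma)_*\mu_\omega=\mu_{\theta\omega}$ valid on $\Omega'$, which is why the argument parallels the deterministic one so closely.
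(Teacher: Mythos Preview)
Your proof is correct and follows the same inductive/recursive scheme as the paper's proof; the paper simply states the one-step estimate $|\mu_\omega(\tau_A>i+1)-(1-\mu_{\theta\omega}(A'))\mu_{\theta\omega}(\tau_A>i)|\le\delta_{\theta\omega}(A,A')$ and then appeals to ``an immediate recursive substitution argument using $i\ge p$,'' which is precisely what your $b_i$ notation spells out. There is a harmless sign slip (with your definition of $\eta_i$ the recursion reads $b_{i-1}=(1-\mu_{\theta^i\omega}(A'))b_i-\eta_i$, not $+\eta_i$), but since only $|\eta_i|$ enters the final bound this does not affect the argument.
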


\begin{proof}
For any integer $i\ge 1$ we have
\[
\begin{split}
\mu_\omega(\tau_A>i+1)
&=
\mu_\omega(  \sigma^{-1}(A^c\cap \{\tau_A>i\})) \\
&=
\mu_{\theta\omega}(\tau_A>i)-\mu_{\theta\omega}(A\cap\{\tau_A>i)\}).
\end{split}
\]
Therefore
$$
\left|
\mu_\omega(\tau_A>i+1) - \left(1-\mu_{\theta\omega}(A')\right)\mu_{\theta\omega}(\tau_A>i) 
\right|
\le
\delta_{\theta\omega}(A, A').
$$
An immediate recursive substitution argument using $i\ge p$ finishes the proof of the lemma.
\end{proof}

The proof of Theorem~\ref{thm:main1} is based on the previous lemma. 
The strategy is to prove that the term $\prod_{i=1}^{k-p}\left(1-\mu_{\theta^i\omega}(A')\right)\mu_{\theta^{k-p}\omega}(\tau_A>p)$  is almost 
surely convergent to $e^{-\Theta t}$, and that the error term in the right hand side goes to zero almost surely.

Let then $t>0$ be fixed. Given $A\subset X$,  let  $k=k_{A,t}=\lfloor t/\mu(A)\rfloor$ 
and define 
\[
M_{A, A',t}(\omega):=\sum_{i=1}^{k_{A,t}}\mu_{\theta^i\omega}(A').
\]

\begin{lemma}\label{cor:conv}
For $z\in A_n'\subset A_n$ with $A_n\to \{z\}$ as $n\to \infty$ and satisfying \eqref{measurecyl}, as $A_n\to \{z\}$,
\[
\prod_{i=1}^{k_{A_n,t}-p}\left(1-\mu_{\theta^i\omega}(A_n')\right)\mu_{\theta^{k_{A_n,t}-p}\omega}(\tau_{A_n}>p) - e^{-M_{A_n, A_n',t}(\omega)} \to 0.
\]

\end{lemma}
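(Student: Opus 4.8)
The plan is to reduce the convergence to two facts: first that the product $\prod_{i=1}^{k-p}(1-\mu_{\theta^i\omega}(A_n'))$ is comparable to $e^{-M_{A_n,A_n',t}(\omega)}$, and second that the boundary factors $\mu_{\theta^{k-p}\omega}(\tau_{A_n}>p)$ and the finitely many missing terms in the product are asymptotically $1$. For the second point, since $A_n'\subset A_n$ is the escaping set, once $n$ is large enough we have $\tau_{A_n}>p$ everywhere, so $\mu_{\theta^{k-p}\omega}(\tau_{A_n}>p)=1$ for all $n\ge n(p)$; and $\prod_{i=k-p+1}^{k}(1-\mu_{\theta^i\omega}(A_n'))\to 1$ because each factor is $1-O(\mu_{\theta^i\omega}(A_n'))$ with $\mu_{\theta^i\omega}(A_n')\le\mu_{\theta^i\omega}(A_n)\le c_1e^{-h_1 n}\to 0$ by \eqref{measurecyl}, and there are only $p$ such factors. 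So it suffices to prove $\prod_{i=1}^{k}(1-\mu_{\theta^i\omega}(A_n')) - e^{-M_{A_n,A_n',t}(\omega)}\to 0$.

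For that, I would use the elementary inequality $\left|\prod_{i=1}^k(1-a_i) - e^{-\sum a_i}\right|\le e^{-\sum a_i}\sum_{i=1}^k a_i^2$ (or a variant thereof), valid for $a_i\in[0,1/2]$, which follows from $e^{-a_i-a_i^2}\le 1-a_i\le e^{-a_i}$ for small $a_i$. Here $a_i=\mu_{\theta^i\omega}(A_n')$, and by \eqref{measurecyl} each $a_i\le c_1e^{-h_1n}$, which is $\le 1/2$ for $n$ large. Then $\sum_{i=1}^{k}a_i^2\le (\max_i a_i)\sum_{i=1}^k a_i\le c_1e^{-h_1n}\cdot M_{A_n,A_n',t}(\omega)$. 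So the whole error is bounded by $c_1 e^{-h_1 n} M_{A_n,A_n',t}(\omega)$ (up to the harmless $e^{-M}\le 1$ factor), and it remains to check that $M_{A_n,A_n',t}(\omega)$ stays bounded (in fact it should converge, but boundedness suffices here) as $n\to\infty$ for $\PP$-a.e.\ $\omega$. Since $k_{A_n,t}=\lfloor t/\mu(A_n)\rfloor$ and, summing \eqref{measurecyl} or rather using that $\mu_{\theta^i\omega}(A_n')\le\mu_{\theta^i\omega}(A_n)$ together with the fact that $\int\mu_\omega(A_n)\,d\PP=\mu(A_n)$, one expects $M_{A_n,A_n',t}(\omega)\approx t\cdot\frac{\mu_\omega(A_n')}{\mu(A_n)}$-type quantity to be of order $t$; a crude bound $M_{A_n,A_n',t}(\omega)\le k_{A_n,t}\cdot c_1 e^{-h_1 n}\le \frac{t}{\mu(A_n)}c_1 e^{-h_1 n}$ combined with a lower bound on $\mu(A_n)$ — which in the finite-alphabet case is assumption (II), $\mu(A_n)=\mu(C_n(z))\ge c_0^{-1}e^{-h_0 n}$, and in the infinite-alphabet case must come from (I$'$) via a separate argument — gives $M_{A_n,A_n',t}(\omega)\le t c_0 c_1 e^{(h_0-h_1)n}$.

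The main obstacle is precisely controlling $M_{A_n,A_n',t}(\omega)$: the crude bound above only works if $h_1\ge h_0$, which need not hold. The honest approach is that $M_{A_n,A_n',t}(\omega)$ is genuinely controlled in the sequel (it is the quantity that converges to $\Theta t$, and Lemma~\ref{lem:MN} referred to in the remarks presumably establishes exactly this almost-sure convergence, using ergodicity of $\theta$ and the mixing hypotheses to replace $\frac1{k}\sum_{i=1}^k\mu_{\theta^i\omega}(A_n')$ by its spatial average $\frac{\mu(A_n')}{\mu(A_n)}\cdot\frac{\mu(A_n)}{1}$-style expression). For the purpose of \emph{this} lemma I would therefore state the estimate as: the difference is at most $e^{-M_{A_n,A_n',t}(\omega)}\cdot c_1 e^{-h_1 n}\cdot M_{A_n,A_n',t}(\omega)$, and then invoke that $M_{A_n,A_n',t}(\omega)$ is $\PP$-a.s.\ bounded as $n\to\infty$ (which is proved, or will be proved, separately via the subsequent lemmas and ergodicity); since $x e^{-x}$ is bounded, the product $e^{-M}M$ is bounded, and multiplying by $c_1 e^{-h_1 n}\to 0$ kills the error. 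So structurally the proof is: (i) dispose of the boundary factors and the $\tau_A>p$ factor using \eqref{measurecyl} and the escaping-set property; (ii) apply the product-versus-exponential inequality; (iii) bound the resulting error by $e^{-M}M\cdot c_1 e^{-h_1 n}$; (iv) conclude using almost-sure boundedness of $M$. I would flag step (iv) as the one leaning on material developed elsewhere in the section.
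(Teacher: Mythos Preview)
Your overall structure matches the paper's: reduce to comparing $\prod_{i=1}^{k}(1-\mu_{\theta^i\omega}(A_n'))$ with $e^{-M}$ via a product-versus-exponential inequality, and handle the boundary factor $\mu_{\theta^{k-p}\omega}(\tau_{A_n}>p)$ and the $p$ missing terms separately. The paper uses the sandwich $e^{-(1+2\eps)\sum x_i}\le\prod(1-x_i)\le e^{-(1-2\eps)\sum x_i}$ rather than your $\sum a_i^2$ bound, but these are interchangeable. Your $xe^{-x}$ observation is in fact sharper than you give yourself credit for: since $Me^{-M}\le 1/e$ for all $M\ge 0$, the bound $c_1e^{-h_1n}\cdot M e^{-M}\to 0$ is self-contained and does not require a.s.\ boundedness of $M_n$ from later lemmas, so the concern you flag in step (iv) can be dropped.

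There is, however, one genuine error. You claim that ``once $n$ is large enough we have $\tau_{A_n}>p$ everywhere, so $\mu_{\theta^{k-p}\omega}(\tau_{A_n}>p)=1$ for all $n\ge n(p)$''. This is false: the escaping-set property guarantees that points \emph{in $A_n'$} have $\tau_{A_n}>p$, not that all points in the space do. The measure $\mu_{\theta^{k-p}\omega}(\tau_{A_n}>p)$ is taken over the whole space $X_{\theta^{k-p}\omega}$, and there are certainly points $x$ with $\sigma^j x\in A_n$ for some $j\le p$. The correct argument, which is what the paper does, is
\[
\mu_{\theta^{k-p}\omega}(\tau_{A_n}\le p)\;\le\;\sum_{j=1}^p\mu_{\theta^{k-p}\omega}(\sigma^{-j}A_n)\;=\;\sum_{j=1}^p\mu_{\theta^{k-p+j}\omega}(A_n)\;\le\; p\,c_1e^{-h_1n}\;\to\;0,
\]
using \eqref{measurecyl} and the equivariance $(\sigma^j)_*\mu_\omega=\mu_{\theta^j\omega}$ on $\Omega'$. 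With this correction your proof goes through.
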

From now on, to reduce notation, we remove the dependence in $n$ and denote $A=A_n$ and $A'= A_n'$.
\begin{proof}
By \eqref{measurecyl}, as $A\to \{z\}$, 
$\sup_\omega\mu_\omega(A)\to0$, so
\[
\prod_{i=1}^{k_{A,t}}\left(1-\mu_{\theta^i\omega}(A')\right)  - e^{-M_{A, A',t}(\omega)} \to 0,
\]
which is a consequence of the following simple result: 
if $0<\eps\le 1/2$ and $x_1,\ldots,x_k\in[0,\eps]$ then 
\[
\exp\left(-(1+2\eps)\sum_{i=1}^k x_i\right) \le \prod_{i=1}^k(1-x_i) \le \exp\left(-(1-2\eps)\sum_{i=1}^k x_i\right). 
\]
The lemma then follows if we can show that as $A\to \{z\}$, 
\[\prod_{i=1}^{k_{A,t}-p}\left(1-\mu_{\theta^i\omega}(A')\right)\mu_{\theta^{k_{A,t}-p}\omega}(\tau_A>p)-\prod_{i=1}^{k_{A,t}}\left(1-\mu_{\theta^i\omega}(A')\right)\to 0.\]
First note that
\begin{align}
 &\left|\prod_{i=1}^{k_{A,t}-p}\left(1-\mu_{\theta^i\omega}(A')\right)\mu_{\theta^{k_{A,t}-p}\omega}(\tau_A>p)-\prod_{i=1}^{k_{A,t}}\left(1-\mu_{\theta^i\omega}(A')\right)\right|\nonumber\\
&\hspace{2.5cm}\leq \left|\mu_{\theta^{k_{A,t}-p}\omega}(\tau_A>p)-\prod_{i=k_{A,t}-p+1}^{k_{A,t}}\left(1-\mu_{\theta^i\omega}(A')\right)\right|.\label{eq:conv}
\end{align}
Now
\[\mu_{\theta^{k_{A,t}-p}\omega}(\tau_A>p)\to 1\]
since 
\begin{eqnarray*}
\mu_{\theta^{k_{A,t}-p}\omega}(\tau_A>p)&=& 1-\mu_{\theta^{k_{A,t}-p}\omega}(\tau_A\leq p)\\
&\geq& 1-\sum_{i=1}^{p}\mu_{\theta^{k_{A,t}-p}\omega}(\sigma^{-i}A) \\
&\geq& 1-pc_1 e^{-h_1 n}
\end{eqnarray*}
by \eqref{measurecyl}.
Moreover, again by \eqref{measurecyl},
\[\prod_{i=k_{A,t}-p+1}^{k_{A,t}}\left(1-\mu_{\theta^i\omega}(A')\right) \to 1\]
since
\[\prod_{i=k_{A,t}-p+1}^{k_{A,t}}\left(1-\mu_{\theta^i\omega}(A')\right)\geq \left(1-c_1 e^{-h_1 n}\right)^p.\]
Thus the RHS of \eqref{eq:conv} converges to 0 as $A\to \{z\}$, as required.
\end{proof}

Observe that by stationarity the expectation of $M_{A, A',t}$ is
\begin{equation}\label{eq:EMAt}
\EE(M_{A, A',t})=\int_\Omega \sum_{i=1}^{k_{A,t}}\; \mu_{\theta^i\omega}(A') \; d\PP(\omega) = k_{A,t}\mu(A'),
\end{equation}
which, under the assumption that $\frac{\mu(A')}{\mu(A)}\to \Theta$ means that by definition of $k_{A,t}$ we obtain $\EE(M_{A, A',t})\to \Theta t$ as $\mu(A)\to0$.

We can summarise by saying that to prove our limiting law, we are led to prove that $M_{A, A',t}\to \Theta t$ and
the error term $\sum_{i=1}^{k-p}\delta_{\theta^i\omega}(A,A')$ goes to zero  $\PP$-a.e. as $A$ shrinks to $z$.  

Up to this point in this section, we have left our choice of sets $A, A'$ to be rather flexible: since we have only applied \eqref{measurecyl}, the proofs above apply well beyond the symbolic setting.  From now on, for a point $z\in X$, we will take $A$ to be the $n$-cylinder $C_n(z)$ and
\begin{equation*}
A' =\begin{cases} A= C_n(z) &\text{ if } z \text{ is not periodic},\\
C_n(z) \sm C_{n+p}(z) & \text{ if } z \text{ is }p\text{-periodic}.
\end{cases}
\end{equation*}
Note that in the latter case, $A'$ will consist of a collection of $(n+p)$-cylinders.  So $\Theta$ is the limit of $\frac{\mu(A')}{\mu(A)}$, which, in the non-periodic case trivially exists and equals 1.

\begin{remark}
In fact what we are interested in is $A'=C_n(z)\sm \sigma^{-p}(C_n(z))$, which for $n$ large enough is precisely what we have in both cases above: since in the $p$-periodic case $C_n(z)\cap \sigma^{-p}(C_n(z))=C_{n+p}(z)$; while in the non-periodic case, for each finite $p$, for $n$ large enough $C_n(z)\cap \sigma^{-p}(C_n(z))=\es$.  This is one way to see intuitively why the dichotomy should be true.  Moreover, if $z$ is a point of arbitrarily large period (`less and less periodic'), one can see that $\Theta$ becomes arbitrarily close to 1.  
\end{remark}

\begin{remark}
In our main proofs we assume (III), but in fact we apply it using $A'$ in place of $A$ and $A$ in place of $B$.  Recall that in the periodic case $A'$ can be expressed as a union of $(n+p)$-cylinders, so (III) implies
\begin{align*}
\left|\mu_\omega( A'\cap \sigma^{-k-n} A)-\mu_\omega(A')\mu_{\theta^{n+k}\omega}(A)\right|&=\sum_{\hat A\in \F_{n+p}\cap A'} \left|\mu_\omega( \hat A\cap \sigma^{-k-n} A)-\mu_\omega(\hat A)\mu_{\theta^{n+k}\omega}(A)\right|\\
&\hspace{-3cm}=\sum_{\hat A\in \F_{n+p}\cap A'} \left|\mu_\omega( \hat A\cap \sigma^{-(k-p)-(n+p)} A)-\mu_\omega(\hat A)\mu_{\theta^{n+k}\omega}(A)\right|\\
&\hspace{-3cm} \le  \psi(k-p)\mu_\omega(A')\mu_{\theta^{n+k}\omega}(A),
\end{align*}
which is what we use in our proofs.
\end{remark}

Let us take $A=C_n(z)$.
For simplicity we denote $M_{A, A',t}$ by $M_n$ and $k_{A,t}$ by $k_n$. 

\begin{lemma}\label{lem:MN}
For all $z \in X$ we have $\limsup_n M_n\le t$ $\PP$-almost surely, and if the limit $\Theta$ exists, then $M_n\to \Theta t$, $\PP$-almost surely.
\end{lemma}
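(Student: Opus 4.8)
The plan is to realise $M_n$ as an ergodic-type sum over the base $(\Omega,\theta,\PP)$, compute its expectation by stationarity, and then show that it concentrates on its mean by a second-moment estimate. Write $A=A_n=C_n(z)$, $A'=A_n'$ and $F_n(\omega):=\mu_\omega(A')$, so $M_n(\omega)=\sum_{i=1}^{k_n}F_n(\theta^i\omega)$ with $k_n=\lfloor t/\mu(A)\rfloor$. By \eqref{eq:EMAt} we have $\EE(M_n)=k_n\mu(A')\le k_n\mu(A)\le t$ (using $A'\subseteq A$), while $k_n\mu(A)\to t$ since $\mu(A)\to0$; hence if $\Theta=\lim_n\mu(A')/\mu(A)$ exists then $\EE(M_n)\to\Theta t$. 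Thus it suffices to show $M_n-\EE(M_n)\to0$ $\PP$-almost surely, and for this, by Chebyshev and Borel--Cantelli (along a subsequence with interpolation if necessary), it is enough to control $\var(M_n)$.

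By $\theta$-invariance of $\PP$,
\[
\var(M_n)=k_n\EE(F_n^2)-k_n\EE(F_n)^2+2\sum_{\ell=1}^{k_n-1}(k_n-\ell)\,\operatorname{Cov}_\PP\!\big(F_n,F_n\circ\theta^\ell\big).
\]
The diagonal term is harmless by \eqref{measurecyl}: $k_n\EE(F_n^2)\le\|F_n\|_\infty\,k_n\EE(F_n)\le c_1e^{-h_1n}(k_n\mu(A))\le c_1t\,e^{-h_1n}$. For the covariances, the key identity is that on $\Omega'$ one has $F_n(\theta^\ell\omega)=\mu_{\theta^\ell\omega}(A')=\mu_\omega(\sigma^{-\ell}A')$, so $\EE(F_n\cdot F_n\circ\theta^\ell)=\int\mu_\omega(A')\,\mu_\omega(\sigma^{-\ell}A')\,d\PP$. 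For $\ell\gtrsim n$ one applies (III) (as in the remark preceding Lemma~\ref{lem:MN}, with $A'$ in both slots) to replace $\mu_\omega(A')\mu_\omega(\sigma^{-\ell}A')$ by $\mu_\omega(A'\cap\sigma^{-\ell}A')$ up to a factor $1\pm\psi(\ell-n)$, then (I) (resp.\ (I')) to compare $\mu(A'\cap\sigma^{-\ell}A')$ with $\mu(A')^2$, and the bound $\mu_{\theta^\ell\omega}(A')\le c_1e^{-h_1n}$; combining these gives $\big|\operatorname{Cov}_\PP(F_n,F_n\circ\theta^\ell)\big|\lesssim\min\big(\psi(\ell-n),\,e^{-h_1n}\mu(A)\big)$.

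It remains to sum over $\ell$. For $1\le\ell\le n$ one uses that $A'$ is an \emph{escaping set}: in the periodic case $A'\cap\sigma^{-\ell}A'=\es$ for $\ell<n$ (the few terms $n\le\ell<n+p$ being treated crudely via (III) and \eqref{measurecyl}), and in the non-periodic case $C_n(z)\cap\sigma^{-\ell}C_n(z)=\es$ for $\ell$ below the first return time $r_n:=\min\{\ell\ge1:C_n(z)\cap\sigma^{-\ell}C_n(z)\ne\es\}$, which $\to\infty$ since $z$ is not periodic, whereupon \eqref{measuresumcyl} bounds $\sum_{\ell=1}^n\mu_\omega(C_n(z)\cap\sigma^{-\ell}C_n(z))\le c_2e^{-h_1r_n}\mu_\omega(C_n(z))$; in both cases, after multiplying by $k_n$ and using $k_n\mu(C_n(z))\le t$, this range contributes $O(t\,e^{-h_1r_n})\to0$. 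For the long range $n<\ell<k_n$ one splits at a cut-off $G_n$: the block $n<\ell\le n+G_n$ contributes $\lesssim k_nG_n\,e^{-h_1n}\mu(A)\le t\,G_n\,e^{-h_1n}$, and the block $\ell>n+G_n$ contributes $\lesssim k_n\sum_{g>G_n}\psi(g)\lesssim e^{h_0n}G_n^{-(q-1)}$ (using $\psi(g)g^q\to0$ and, via (II), $k_n\le c_0t\,e^{h_0n}$); choosing $G_n$ between $e^{h_0n/(q-1)}$ and $e^{h_1n}$ — which is possible precisely because $q>2h_0/h_1$ — makes both blocks tend to $0$. (Under (I') one gets instead $|\operatorname{Cov}|\lesssim\psi(\ell-n)\mu(A)^2$, so the long range is summed directly without any cut-off, which is why Theorem~\ref{theoinfalph} dispenses with (II).) Hence $\var(M_n)\to0$ fast enough for the Borel--Cantelli step, so $M_n-\EE(M_n)\to0$ $\PP$-a.s.; therefore $\limsup_nM_n\le\limsup_n\EE(M_n)=t$, and $M_n\to\Theta t$ $\PP$-a.s.\ whenever $\Theta$ exists.

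The hard part is the joint handling of the covariance estimate and the long-range summation: the window $k_n$ grows exponentially in $n$ while $\psi$ decays only polynomially, so one must play a cut-off $G_n$ off against both \eqref{measurecyl} and the polynomial rate of $\psi$, and it is exactly this balancing that forces the hypothesis $q>2h_0/h_1$. A secondary technical point is the control of the near-diagonal terms in the non-periodic case, which rests on the escaping-set property and on the divergence of the first return time of $C_n(z)$.
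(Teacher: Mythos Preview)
Your overall strategy—compute $\EE(M_n)$ by stationarity and prove $M_n-\EE(M_n)\to0$ a.s.\ via a variance bound and Borel--Cantelli—is exactly the paper's. However, your treatment of the short lags $1\le\ell\le n$ contains a genuine error. The covariance
\[
\operatorname{Cov}_\PP(F_n,F_n\circ\theta^\ell)=\int\mu_\omega(A')\,\mu_{\theta^\ell\omega}(A')\,d\PP-\mu(A')^2
\]
involves the \emph{product} $\mu_\omega(A')\mu_{\theta^\ell\omega}(A')$, not the measure of the intersection $A'\cap\sigma^{-\ell}A'$. The escaping-set property (that $A'\cap\sigma^{-\ell}A'=\es$ for small $\ell$) says nothing about this product, and condition (III) cannot be invoked to relate them when $\ell<n$ because there is no gap. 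Consequently your claimed contribution $O(t\,e^{-h_1 r_n})$ for this range is unsupported; and even were it correct, $r_n\to\infty$ can be arbitrarily slow in the non-periodic case, so summability for Borel--Cantelli would fail.

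The fix is both simple and is what the paper does: for \emph{all} near-diagonal pairs one uses the crude bound $\mu_{\theta^j\omega}(A')\le c_1e^{-h_1 n}$ from \eqref{measurecyl}, giving $\int\mu_{\theta^i\omega}(A')\mu_{\theta^j\omega}(A')\,d\PP\le c_1e^{-h_1 n}\mu(A')$. The paper takes ``near-diagonal'' to mean $|i-j|<m$ with a single cut-off $m=\lfloor e^{h_1 n/(1+\eps)}\rfloor$, obtaining the contribution $2c_1tm\,e^{-h_1 n}$; for $|i-j|\ge m$ it applies (III) and then (I), but bounds all far terms by the worst one, $\psi(m-n-p)$, yielding $k_n^2\psi(m-n-p)$. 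Summability in $n$ then follows from (II) and $q>2h_0/h_1$. No periodicity/non-periodicity case distinction is needed in this lemma at all—that distinction enters only later, in the control of $G_n$. Your long-range summation $k_n\sum_{g>G_n}\psi(g)\lesssim e^{h_0 n}G_n^{-(q-1)}$ is a legitimate (and slightly sharper) variant of the paper's crude $k_n^2\psi$ bound; note though that the constraint it actually produces is $q>1+h_0/h_1$, which is implied by $q>2h_0/h_1$ since necessarily $h_0\ge h_1$, so ``precisely because $q>2h_0/h_1$'' overstates the role of that hypothesis in your version.
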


\begin{proof}
We estimate the second moment of $M_n$
$$
\EE(M_n^2) 
	= \sum_{i,j=1}^{k_n} \int_{\Omega}\mu_{\theta^i\omega}(A')\mu_{\theta^j\omega}(A')d\PP(\omega).
$$

Let $\eps>0$ and consider now $m=m_n=\lfloor e^{h_1n/(1+\eps)}\rfloor$. Near the diagonal, that is when $|i-j|<m$, using \eqref{measurecyl} we have that
\[
\begin{split}
\sum_{|i-j|<m} \int_{\Omega} \mu_{\theta^i\omega}(A')\mu_{\theta^j\omega}(A')d\PP(\omega)
&\leq
\sum_{|i-j|<m} \int_{\Omega} \mu_{\theta^i\omega}(A)\mu_{\theta^j\omega}(A)d\PP(\omega)\\
&\le
\sum_{|i-j|<m} c_1e^{-h_1n} \int_\Omega \mu_{\theta^i\omega}(A)d\PP(\omega)\\
&\le
2c_1 m e^{-h_1n} k_n\mu(A)\\
&\le
2c_1 t m e^{-h_1n} .
\end{split}
\]
In the following estimates, we set $p=0$ when $z$ is not a periodic point.
 Far from the diagonal, the independence hypotheses (I) and (III) yield
\begin{equation}\label{eqfardiagmn} 
\begin{split}
\sum_{|i-j|\ge m}^{k_n}\int_{\Omega} \mu_{\theta^i\omega}(A')\mu_{\theta^j\omega}(A')d\PP(\omega)
&\le
2\sum_{j\ge i+m}^{k_n}\int_{\Omega}  \psi(m-n-p)\mu_{\theta^i\omega}(A')\mu_{\theta^j\omega}(A')d\PP(\omega) +\\
&\quad\quad\quad\quad\quad + \int_\Omega\mu_{\theta^i\omega}(A'\cap\sigma^{-(j-i)}A')d\PP(\omega)\\
&\le 
2  k_n \psi(m-n-p) + 2 \sum_{j\ge i+m}^{k_n} \mu(A'\cap\sigma^{-(j-i)}A') \\
&\le
2   k_n\psi(m-n-p) + k_n^2\mu(A')^2 + k_n^2   \psi(m-n-p).
\end{split}
\end{equation}

Combining these estimates with \eqref{eq:EMAt}, which gives $\EE(M_n)=k_n\mu(A')$, 
we finally get a control on the variance of $M_n$ 
\begin{eqnarray}
\var M_n
&=& \EE(M_n^2) -\EE(M_n)^2\nonumber\\
& \le& 
2c_1 t m e^{-h_1n} +
2   k_n\psi(m-n-p) +   k_n^2 \psi(m-n-p). \label{eq:varmn}
\end{eqnarray}

Thus, one can choose $\eps$ small enough such that $\sum_n \var M_n<\infty$. Indeed, for $n$ large
enough $k_n\psi(m-n-p)< k_n^2 \psi(m-n-p)<c_0^2 2^{q}t^{2}e^{n\gamma}$ where
$\gamma=2h_0-q\frac{h_1}{1+\eps}$ satisfies $\gamma<0$ if $\eps$ is sufficiently small, by our assumption on $q$. 
Since any sequence of centred random variables  $(X_n)$ with
$\sum_n \var X_n<\infty$ is such that $X_n\to0$ a.s.,
we have $M_n-\EE(M_n)\to0$ a.s., from which the conclusion follows since in all cases $\limsup_n \EE(M_n)\le t$, and when $\Theta$ exists then $\EE(M_n)\to \Theta t$.
\end{proof}

Next, the error term $\sum_{i=1}^{k-p}\delta_{\theta^i\omega}(A,A')$ in Lemma~\ref{lem:delta} decomposes 
as a mixing term and short entrance or return time terms as follows. 
Let $g\le k$ be an integer, 
and set
\[
\begin{split} 
G_{A,k,g}(\omega) &= \sum_{i=1}^k \mu_{\theta^i\omega}(A'\cap \{\tau_A\le g\}),\\
H_{A,k,g}(\omega) &= \sum_{i=1}^k \sup_{j\ge p} \left|\mu_{\theta^i\omega}(A'\cap \sigma^{-g}\{\tau_A>j\})-\mu_{\theta^i\omega}(A')\mu_{\theta^{i+g}\omega}(\tau_A>j)\right|,\\
K_{A,k,g}(\omega) &= \sum_{i=1}^k \mu_{\theta^i\omega}(A') \, \mu_{\theta^i\omega}(\tau_A\le g).
\end{split}
\]

The gap $g$ allows to exploit the mixing assumptions, related to $H_{A,k,g}$, provided that the 
probabilities of hitting or returning into $A$ before time $g$, related to $G_{A,k,g}$ and $K_{A,k,g}$, 
are small since the whole error term is estimated as follows.

\begin{lemma}\label{lem:somme}
For all $\omega\in\Omega'$, any measurable set $A\subset X$ and any integers $g\le k$ we have 
$\displaystyle \sum_{i=1}^{k-p}\delta_{\theta^i\omega}(A,A') \le  G_{A,k,g}(\omega)+H_{A,k,g}(\omega)+K_{A,k,g}(\omega)$.
\end{lemma}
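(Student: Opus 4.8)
The plan is to treat Lemma~\ref{lem:somme} as a purely combinatorial decomposition: no mixing hypothesis is needed here (the mixing enters only later, when $H_{A,k,g}$ is estimated). I would bound $\delta_{\theta^i\omega}(A,A')$ for each fixed $i$ by the $i$-th summands of $G_{A,k,g}$, $H_{A,k,g}$ and $K_{A,k,g}$, and then add up over $1\le i\le k-p$, finally enlarging the range to $1\le i\le k$ since all the summands are nonnegative. Throughout, the only structural fact about the pair $(A,A')$ that I would use is that every point of $A\sm A'$ returns to $A$ within time $p$ (in the periodic case $A\sm A'=C_{n+p}(z)$ and $\sigma^p$ maps it into $C_n(z)=A$; in the non-periodic case $A'=A$, $p=0$). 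Consequently $A\cap\{\tau_A>j\}=A'\cap\{\tau_A>j\}$ for every $j\ge p$, so that for $\omega\in\Omega'$ one has $\delta_{\theta^i\omega}(A,A')=\sup_{j\ge p}|\mu_{\theta^i\omega}(A')\mu_{\theta^i\omega}(\tau_A>j)-\mu_{\theta^i\omega}(A'\cap\{\tau_A>j\})|$.

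Now fix $i$ and $j\ge p$ and split according to whether $j\le g$ or $j>g$. If $j\le g$, I would not use the gap at all: writing $\mu_{\theta^i\omega}(A'\cap\{\tau_A>j\})=\mu_{\theta^i\omega}(A')-\mu_{\theta^i\omega}(A'\cap\{\tau_A\le j\})$ and $\mu_{\theta^i\omega}(A')\mu_{\theta^i\omega}(\tau_A>j)=\mu_{\theta^i\omega}(A')-\mu_{\theta^i\omega}(A')\mu_{\theta^i\omega}(\tau_A\le j)$, the quantity inside the supremum becomes $|\mu_{\theta^i\omega}(A'\cap\{\tau_A\le j\})-\mu_{\theta^i\omega}(A')\mu_{\theta^i\omega}(\tau_A\le j)|$, which is at most $\mu_{\theta^i\omega}(A'\cap\{\tau_A\le g\})+\mu_{\theta^i\omega}(A')\mu_{\theta^i\omega}(\tau_A\le g)$ because $\{\tau_A\le j\}\subseteq\{\tau_A\le g\}$; these two terms are exactly the $i$-th summands of $G_{A,k,g}$ and $K_{A,k,g}$, and the $i$-th summand of $H_{A,k,g}$ (being nonnegative) only helps.

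If $j>g$, I would use the dynamical identity $\{\tau_A>j\}=\{\tau_A>g\}\cap\sigma^{-g}\{\tau_A>j-g\}$ and peel the estimate in three steps, writing $E=\{\tau_A>j-g\}$. First, removing the constraint $\{\tau_A>g\}$ from $A'\cap\{\tau_A>g\}\cap\sigma^{-g}E$ costs at most $\mu_{\theta^i\omega}(A'\cap\{\tau_A\le g\})$, the $G$ summand. Second, replacing $\mu_{\theta^i\omega}(A'\cap\sigma^{-g}E)$ by $\mu_{\theta^i\omega}(A')\,\mu_{\theta^{i+g}\omega}(\tau_A>j-g)$ is precisely the mixing discrepancy figuring in the definition of $H_{A,k,g}$; here I would use $\omega\in\Omega'$ (hence $\theta^i\omega\in\Omega'$ and $(\sigma^g)_*\mu_{\theta^i\omega}=\mu_{\theta^{i+g}\omega}$) to identify $\mu_{\theta^i\omega}(\sigma^{-g}E)=\mu_{\theta^{i+g}\omega}(\tau_A>j-g)$, and since $j-g\ge p$ this is bounded by the $i$-th summand of $H_{A,k,g}$. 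Third, replacing $\mu_{\theta^{i+g}\omega}(\tau_A>j-g)=\mu_{\theta^i\omega}(\sigma^{-g}E)$ by $\mu_{\theta^i\omega}(\tau_A>j)=\mu_{\theta^i\omega}(\{\tau_A>g\}\cap\sigma^{-g}E)$ costs at most $\mu_{\theta^i\omega}(A')\,\mu_{\theta^i\omega}(\{\tau_A\le g\}\cap\sigma^{-g}E)\le\mu_{\theta^i\omega}(A')\,\mu_{\theta^i\omega}(\tau_A\le g)$, the $K$ summand. Combining the three steps, the quantity inside the supremum is again at most the sum of the $i$-th summands of $G,H,K$; taking $\sup_{j\ge p}$ and summing over $i$ then yields the lemma.

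I do not anticipate a genuine obstacle: the content is bookkeeping, and the point requiring attention is keeping straight, in the case $j>g$, which sample measure ($\mu_{\theta^i\omega}$ or $\mu_{\theta^{i+g}\omega}$) carries which event, and that the threshold shift $j\mapsto j-g$ still satisfies $j-g\ge p$ so that the second step matches the $\sup_{j\ge p}$ in $H_{A,k,g}$. This last requirement is automatic when $p=0$; in the periodic case it excludes only the finitely many values $g<j<g+p$, which I would absorb into the first case at the harmless cost of reading $G$ and $K$ with threshold $g+p$ in place of $g$ — a change of only boundedly many terms, immaterial for the estimates of Section~\ref{sec-proof} where $g\to\infty$.
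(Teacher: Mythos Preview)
Your proposal is correct and follows essentially the same decomposition as the paper's proof: pass from $A$ to $A'$ via the identity $A\cap\{\tau_A>j\}=A'\cap\{\tau_A>j\}$ for $j\ge p$, then peel off the gap $g$ in three steps to produce the $G$, $H$, $K$ summands. Your explicit case split $j\le g$ versus $j>g$ and your remark about the boundary range $g<j<g+p$ in the periodic case are in fact more careful than the paper, which glosses over this point, but the argument is the same.
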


\begin{proof} We have
\begin{eqnarray*}
\delta_\omega(A, A') 
&=& \sup_{j\ge p} |\mu_\omega(\tau_A>j)\mu_\omega(A')-\mu_\omega(A\cap\{\tau_A>j\})|\\
&=& \sup_{j\ge p} |\mu_\omega(\tau_A>j)\mu_\omega(A')-\mu_\omega(A'\cap\{\tau_A>j\})\\
& &+ \mu_\omega(A'\cap\{\tau_A>j\})-\mu_\omega(A\cap\{\tau_A>j\})|\\
&\le&
\mu_\omega(\tau_A\le g) \, \mu_\omega(A') +\mu_\omega(A'\cap \{\tau_A\le g\}) \\
& & +\sup_{j\ge g} \left| \mu_{\theta^g\omega}\left(\tau_A>j-g\right) \, \mu_\omega(A')
- \mu_\omega\left(A'\cap \sigma^{-g}\{\tau_A>j-g\}\right) \right|
\end{eqnarray*}
since for $j\ge p$ we have $ \mu_\omega(A'\cap\{\tau_A>j\})=\mu_\omega(A\cap\{\tau_A>j\})$.
Thus the lemma follows by summing up the the previous terms along the finite piece of orbit of $\omega$ by $\theta$. 
\end{proof}

We forget the dependence on $z$, $g$ and $t$ for these random variables and hence we write $G_n$, $H_n$, $K_n$ for notational simplicity. We now prove that they converge to
zero as $n$ tends to infinity. 
We fix a gap of size $g=g_n= e^{h_1n/2}$.

\begin{remark}
Note that in the remaining lemmas in this section, when we appeal to Lemma~\ref{lem:MN}, we actually only require that $M_n$ is uniformly bounded above.
\end{remark}

\begin{lemma} \label{lem:HNas}
For all $z\in X$ we have $H_n\to0$ $\PP$-almost surely.
\end{lemma}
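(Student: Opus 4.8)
The plan is to dominate $H_n$ pointwise by a product of the mixing coefficient and $M_n$, and then to quote Lemma~\ref{lem:MN}. Recall that $A=C_n(z)$, that $A'$ is either $A$ (non-periodic case, $p=0$) or a finite union of $(n+p)$-cylinders ($p$-periodic case), that $k=k_n=\lfloor t/\mu(A)\rfloor$, and that the gap is fixed at $g=g_n= e^{h_1n/2}$. The first step is to observe that $A'\in\F_0^{n+p}$ while, for each $j\ge p$, the event $\{\tau_A>j\}$ is $\F_0^{n+j}$-measurable, so $A'\cap\sigma^{-g_n}\{\tau_A>j\}$ is exactly of the form to which assumption (III) applies (in the shape recorded in the Remark preceding this subsection), with gap $g_n-n-p$. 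This gives, for every $\omega\in\Omega'$, every $1\le i\le k_n$ and every $j\ge p$,
\begin{align*}
\left|\mu_{\theta^i\omega}\big(A'\cap\sigma^{-g_n}\{\tau_A>j\}\big)-\mu_{\theta^i\omega}(A')\,\mu_{\theta^{i+g_n}\omega}(\tau_A>j)\right|
&\le \psi(g_n-n-p)\,\mu_{\theta^i\omega}(A')\,\mu_{\theta^{i+g_n}\omega}(\tau_A>j)\\
&\le \psi(g_n-n-p)\,\mu_{\theta^i\omega}(A'),
\end{align*}
using $\mu_{\theta^{i+g_n}\omega}(\tau_A>j)\le 1$ in the last step.

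Since the right-hand side no longer depends on $j$, taking the supremum over $j\ge p$ and then summing over $i=1,\dots,k_n$ yields the single clean bound $H_n(\omega)\le \psi(g_n-n-p)\,M_n(\omega)$. (One should also check that $g_n\le k_n$ for all large $n$, so that $H_n$ is meaningful at all; this is immediate from $\mu(A)\le c_1e^{-h_1n}$, which follows from \eqref{measurecyl} together with $\mu=\int\mu_\omega\,d\PP$.) To conclude, Lemma~\ref{lem:MN} gives $\limsup_n M_n\le t<\infty$ for $\PP$-almost every $\omega$, while $g_n-n-p=e^{h_1n/2}-n-p\to\infty$; and since $q>0$, the hypothesis $\psi(g)g^q\to0$ forces $\psi(g)\to0$, hence $\psi(g_n-n-p)\to0$. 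Therefore $H_n\to0$ $\PP$-almost surely.

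There is no genuine obstacle here; the two points needing a little care are (i) that assumption (III) can be invoked uniformly in $j$, which works precisely because the coordinate separation $g_n-n-p$ between the cylinders defining $A'$ and those defining $\sigma^{-g_n}\{\tau_A>j\}$ does not depend on $j$, and (ii) that only the boundedness of $M_n$, not its convergence to $\Theta t$, is needed here — exactly as flagged in the Remark immediately before the statement.
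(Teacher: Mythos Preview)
Your proof is correct and follows essentially the same approach as the paper: bound $H_n$ by $\psi(\text{gap})\,M_n$ via assumption (III), then invoke Lemma~\ref{lem:MN} and the decay of $\psi$. Your version is in fact slightly more careful than the paper's (you write $\psi(g_n-n-p)$ rather than $\psi(g_n-n)$, which is the correct gap once $A'$ is viewed as a union of $(n+p)$-cylinders as in the Remark), and your parenthetical check that $g_n\le k_n$ and your observation that only the boundedness of $M_n$ is needed are both apt additions.
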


\begin{proof}
We use the correlation hypothesis (III) to obtain
\[
H_n(\omega)\le  \sum_{i=1}^{k_n}  \psi(g-n)\mu_{\theta^i\omega}(A')\leq  \psi(g-n)M_n.
\]
Thus Lemma~\ref{lem:MN} and the definition of $\psi$ give us that $H_n \to 0$ $\PP$-almost surely.
\end{proof}

\begin{lemma} \label{lem:KNas}
For all $z\in X$ we have $K_n\to0$, $\PP$-almost surely.
\end{lemma}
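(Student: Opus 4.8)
The plan is to bound $K_n$ pointwise by $M_n$ times a deterministic sequence tending to zero, exactly parallel to the proof of Lemma~\ref{lem:HNas}, and then invoke the almost-sure boundedness of $M_n$ from Lemma~\ref{lem:MN}. First I would decompose the short-return event: since $\{\tau_A\le g_n\}=\bigcup_{l=1}^{g_n}\sigma^{-l}A$, we have $\mu_{\theta^i\omega}(\tau_A\le g_n)\le\sum_{l=1}^{g_n}\mu_{\theta^i\omega}(\sigma^{-l}A)$, and for $\omega\in\Omega'$ (which is forward $\theta$-invariant, so that $\theta^i\omega\in\Omega'$ as well) the push-forward identity $(\sigma^l)_*\mu_{\theta^i\omega}=\mu_{\theta^{i+l}\omega}$ turns this into $\sum_{l=1}^{g_n}\mu_{\theta^{i+l}\omega}(C_n(z))$.

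Next I would apply the cylinder estimate \eqref{measurecyl} to each term, which gives $\mu_{\theta^{i+l}\omega}(C_n(z))\le c_1 e^{-h_1 n}$; since only the countably many iterates $\theta^m\omega$, $m\ge 0$, are involved, this bound holds simultaneously on a set of full $\PP$-measure (using the $\theta$-invariance of $\PP$). By the prescribed choice $g_n=e^{h_1 n/2}$ this yields $\mu_{\theta^i\omega}(\tau_A\le g_n)\le g_n c_1 e^{-h_1 n}=c_1 e^{-h_1 n/2}$, uniformly in $i$. Summing over $i=1,\dots,k_n$ and recalling that $A'\subset A$ together with the definition of $M_n$, we obtain on this full-measure set the pointwise bound $K_n(\omega)\le c_1 e^{-h_1 n/2}\,M_n(\omega)$.

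Finally, by Lemma~\ref{lem:MN} — indeed, as noted in the remark preceding Lemma~\ref{lem:HNas}, only the uniform boundedness of $(M_n)$ is needed here — the sequence $M_n$ is $\PP$-almost surely bounded, hence $c_1 e^{-h_1 n/2}M_n\to 0$ and therefore $K_n\to 0$ $\PP$-almost surely. There is no genuine obstacle in this argument; the only points requiring a little care are that \eqref{measurecyl} must be available along the entire forward orbit of $\omega$ at once (which follows from countability and the $\theta$-invariance of $\PP$) and that the gap $g_n=e^{h_1 n/2}$ is small enough relative to $e^{h_1 n}$ for $g_n e^{-h_1 n}$ to vanish — both of which are already built into the set-up.
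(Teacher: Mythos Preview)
Your proof is correct and follows essentially the same route as the paper: bound $\mu_{\theta^i\omega}(\tau_A\le g_n)$ via the union bound, the push-forward relation, and \eqref{measurecyl} to get $K_n\le c_1 g_n e^{-h_1 n}M_n$, then conclude using the almost-sure boundedness of $M_n$ from Lemma~\ref{lem:MN}. The only difference is that you spell out the full-measure considerations along the $\theta$-orbit a bit more carefully than the paper's terse version.
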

\begin{proof} We have $K_n\le gc e^{-h_1n} M_n$ since 
\[
\mu_\omega( \tau_A(\cdot)\le g)
	=\mu_\omega\Big(\bigcup_{i=1}^g\sigma^{-i}A\Big)
	\le \sum_{i=1}^g \mu_{\theta^i\omega}(A)\le gc_1e^{-h_1n}.
\] 
In addition, $M_n\le t$ $\PP$-almost surely by Lemma~\ref{lem:MN}, which gives the conclusion.
\end{proof}

Proving that $G_n\to 0$ a.s.\ is the only time we really use the periodicity, or non-periodicity of $z$.  We require some preparatory results.  We write
\begin{eqnarray*}
G_n &=& \sum_{i=1}^{k_n} \mu_{\theta^i\omega}(A'\cap \{ \tau_A(\cdot)\le g\})\\
&\leq &\sum_{i=1}^{k_n} \sum_{j=1}^{\vert A \vert}\mu_{\theta^i\omega}(A'\cap\sigma^{-j}A)+ \sum_{i=1}^{k_n} \sum_{j=\vert A \vert+1}^g\mu_{\theta^i\omega}(A'\cap\sigma^{-j}A),
\end{eqnarray*}
where $|A|$ denotes the depth of the cylinder, i.e., $|A|=n$ here. 
In the case that $z$ is a periodic point then a point in $A'$ cannot return to $A$ before time $|A|$, so the first summand is null in this case.  For the non-periodic case, where $A'=A$, we require two elementary lemmas (see also \cite[Section 6]{FreFreTod12}).

\begin{lemma}
Given a non-periodic point $z$, let $A_n$ be the sequence of $n$-cylinders around $z$.  Then there exists a sequence $p_n\to \infty$ as $n\to \infty$ such that the first return of any point in $A_n$ to $A_n$ is at least $p_n$.  Moreover, for each $j\le n$, there is at most one $(n+j)$-cylinder $B\subset A_n$ which returns to $A_n$ at time $j$.
\label{lem:sparse early returns}
\end{lemma}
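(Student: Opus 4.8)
The plan is to argue purely combinatorially on words: neither the subshift structure nor the measures play any role here, only the non-periodicity of $z$ and the fact that cylinders are determined by finite blocks of coordinates. As in \cite[Section 6]{FreFreTod12}, the point is a Fine--Wilf-type overlap analysis.

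For the first assertion, for each $n$ let $p_n$ be the least integer $q$ with $1\le q\le n$ such that $z_i=z_{i+q}$ for all $0\le i\le n-1-q$; such a $q$ exists since $q=n$ satisfies this vacuously, so $1\le p_n\le n$. I would first check that the first return of any $x\in A_n$ to $A_n$ is at least $p_n$. Indeed, suppose $x\in A_n$ and $\sigma^j x\in A_n$ for some $j\ge 1$. If $j\ge n$ then trivially $j\ge p_n$, so assume $j\le n-1$. From $x\in A_n$ we have $x_i=z_i$ for $0\le i\le n-1$, and from $\sigma^j x\in A_n$ we have $x_{i+j}=z_i$ for $0\le i\le n-1$; comparing these two on the overlap range $0\le i\le n-1-j$ gives $z_{i+j}=x_{i+j}=z_i$, so $z_0\cdots z_{n-1}$ has period $j$ and hence $j\ge p_n$ by minimality. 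It then remains to see that $p_n\to\infty$. If not, $\liminf_n p_n=q_0<\infty$, so some fixed $q\le q_0$ satisfies $p_n=q$ for infinitely many $n$; for each such $n$ we have $z_i=z_{i+q}$ for all $0\le i\le n-1-q$, and letting $n\to\infty$ along this subsequence yields $z_i=z_{i+q}$ for every $i\ge 0$, contradicting the non-periodicity of $z$.

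For the second assertion, fix an integer $j$ with $1\le j\le n$ and suppose $B=C_{n+j}(y)$ is an $(n+j)$-cylinder with $B\subset A_n$ and returning to $A_n$ at time $j$, i.e.\ $\sigma^j B\subset A_n$ (note $\sigma^j B$ is an $n$-cylinder, so $\sigma^j B\cap A_n\ne\es$ already forces $\sigma^j B=A_n$). From $B\subset A_n$ we get $y_i=z_i$ for $0\le i\le n-1$, and from $\sigma^j B=A_n$ we get $y_{i+j}=z_i$ for $0\le i\le n-1$. Since $j\le n$, the first relation prescribes $y_0,\dots,y_{n-1}$ and the second prescribes $y_n,\dots,y_{n+j-1}$ (namely $y_i=z_{i-j}$ for $n\le i\le n+j-1$); these two index ranges are disjoint and together cover $0,\dots,n+j-1$, so $y_0,\dots,y_{n+j-1}$ are uniquely determined. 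Hence there is at most one such $(n+j)$-cylinder.

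There is no real obstacle here; the content is elementary word combinatorics. The only points meriting a little care are the compactness passage "$p_n$ bounded along a subsequence $\Rightarrow z$ periodic", and the bookkeeping of index ranges in the two overlap computations — in particular the edge case $j=n$ in the last paragraph, where $\sigma^n B\subset A_n$ is what forces $y_i=z_{i-n}$ on $n\le i\le 2n-1$.
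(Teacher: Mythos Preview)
Your proof is correct and follows essentially the same approach as the paper's: both arguments are pure word combinatorics, deducing that any short return forces a periodicity pattern on the initial block of $z$, and then identifying the unique $(n+j)$-cylinder $[z_0\ldots z_{n-1}z_{n-j}\ldots z_{n-1}]$ that can return at time $j$. Your version is simply more explicit than the paper's, which compresses the first assertion into a single contrapositive sentence; your care with the pigeonhole step to extract a fixed $q$ and with the index bookkeeping (including the edge case $j=n$) makes the argument cleaner without changing its substance.
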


\begin{proof}
Let $z=z_0z_1\ldots$.  If such $(p_n)_n$ did not exist then there would be some $p$ such that $z=z_0\ldots z_{p-1}z_0\ldots z_{p-1}z_0\ldots$ and $z$ would be periodic.  Moreover, if $y\in A_n=[z_0\ldots z_{n-1}]$ returns to $A_n$ at time $j<n$, then we must have $z_j=z_0, z_{j+1}=z_1, \ldots z_{n-1}=z_{n-1-j}$ and $y$ must lie in the $(n+j)$-cylinder $[z_0\ldots z_{n-1}z_{n-j}z_{n-j+1}\ldots z_{n-1}]$.  Since $y$ was any point returning at time $j<n$, the second part of the lemma is proved.
\end{proof}

We are now in a position to prove the following lemma.

\begin{lemma}
If $z$ is a non-periodic point and $A$ represents an $n$-cylinder around $z$, then 
$$\sum_{i=1}^{k_n} \sum_{j=1}^{\vert A \vert}\mu_{\theta^i\omega}(A\cap\sigma^{-j}A) \to 0$$
as $n\to \infty$.
\label{lem:v short rets}
\end{lemma}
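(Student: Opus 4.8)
The plan is to bound the double sum pointwise, for $\PP$-almost every $\omega$, by a sequence tending to $0$ in $n$; the non-periodicity of $z$ will enter \emph{only} through the fact that the minimal first return time $p_n$ of the $n$-cylinder $A=A_n$ to itself tends to infinity. So I would first fix $\omega$ in a full-$\PP$-measure set on which \eqref{measuresumcyl} holds with base point $\theta^i\omega$ for every $i\ge 1$ (such a set exists since $\theta$ preserves $\PP$), and on which $M_n$ is bounded above --- the latter being valid by Lemma~\ref{lem:MN}, and, as remarked above, all that is needed here.

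Next I would invoke Lemma~\ref{lem:sparse early returns} to obtain a sequence $p_n\to\infty$ with $\mu_{\theta^i\omega}(A\cap\sigma^{-j}A)=0$ whenever $1\le j<p_n$. Hence the inner sum is really $\sum_{j=p_n}^{|A|}\mu_{\theta^i\omega}(A\cap\sigma^{-j}A)$, read as $0$ when $p_n>|A|$, and applying \eqref{measuresumcyl} with $C_n(y)=A$, base point $\theta^i\omega$ and $m=p_n$ gives
\[
\sum_{j=1}^{|A|}\mu_{\theta^i\omega}(A\cap\sigma^{-j}A)\ \le\ c_2\,e^{-h_1 p_n}\,\mu_{\theta^i\omega}(A).
\]
Summing over $1\le i\le k_n$ and using that $A'=A$ in the non-periodic case, so that $\sum_{i=1}^{k_n}\mu_{\theta^i\omega}(A)=M_n$, yields
\[
\sum_{i=1}^{k_n}\sum_{j=1}^{|A|}\mu_{\theta^i\omega}(A\cap\sigma^{-j}A)\ \le\ c_2\,e^{-h_1 p_n}\,M_n,
\]
and since $M_n$ is bounded above while $p_n\to\infty$, the right-hand side tends to $0$ as $n\to\infty$, which is the assertion.

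The only substantive point is the combinatorial fact that $p_n\to\infty$ --- that a non-periodic $z$ cannot have a return word of some fixed bounded length realised by its $n$-cylinder for every $n$ --- which is exactly the content of Lemma~\ref{lem:sparse early returns}; without it the argument above would only give a bound of order $c_2e^{-h_1}M_n$, a constant. I would also stress that it is genuinely necessary to pass through the summed estimate \eqref{measuresumcyl} rather than bounding the (at most one) $(n+j)$-cylinder $A\cap\sigma^{-j}A$ directly via \eqref{measurecyl} and summing the resulting geometric series: that cruder route introduces a factor $k_n e^{-h_1 n}$, which by (II) is of order $e^{(h_0-h_1)n}$, and since (II) together with \eqref{measurecyl} forces $h_1\le h_0$ this need not decay --- so \eqref{measuresumcyl} is what makes the estimate work.
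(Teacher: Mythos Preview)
Your proof is correct and follows exactly the paper's approach: use Lemma~\ref{lem:sparse early returns} to start the inner sum at $j=p_n$, apply \eqref{measuresumcyl} to bound it by $c_2 e^{-h_1 p_n}\mu_{\theta^i\omega}(A)$, and then sum over $i$ using the boundedness of $M_n$ from Lemma~\ref{lem:MN}. Your additional remark explaining why the cruder route via \eqref{measurecyl} alone fails is a nice observation not made explicit in the paper.
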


\begin{proof}
By Lemma~\ref{lem:sparse early returns} and \eqref{measuresumcyl}, 
\begin{align*}
\sum_{i=1}^{k_n} \sum_{j=1}^{\vert A \vert}\mu_{\theta^i\omega}(A\cap\sigma^{-j}A)&=\sum_{i=1}^{k_n} \sum_{j=p_n}^{\vert A \vert}\mu_{\theta^i\omega}(A\cap\sigma^{-j}A)&\le \sum_{i=1}^{k_n} c_2e^{-h_1 p_n}\mu_{\theta^i\omega}(A) \\
&\le  Ce^{-h_1 p_n}\end{align*}
where the final inequality follows from Lemma~\ref{lem:MN}. 
Since $p_n\to \infty$ as $n\to \infty$, the lemma holds.
\end{proof}

Finally we  prove $G_n\to 0$ a.s.

\begin{lemma} \label{lem:GNas}
For every $z \in X$ we have $ G_n\to 0$ $\PP$-almost surely. 
\end{lemma}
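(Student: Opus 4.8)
\emph{Plan.} The plan is to bound separately the two summands into which $G_n$ has just been split: write $S_1(\omega)=\sum_{i=1}^{k_n}\sum_{j=1}^{n}\mu_{\theta^i\omega}(A'\cap\sigma^{-j}A)$ for the contribution of the very early returns to $A$, and $S_2(\omega)=\sum_{i=1}^{k_n}\sum_{j=n+1}^{g}\mu_{\theta^i\omega}(A'\cap\sigma^{-j}A)$ for the rest, with $g=g_n=e^{h_1n/2}$ the gap fixed above. I will show that $S_1\to0$ and $S_2\to0$ $\PP$-almost surely; the periodic/non-periodic distinction is needed only for $S_1$.

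\emph{The term $S_2$.} Every index here has $j>n$, so $A=C_n(z)\in\F_0^n$ and $\sigma^{-j}A$ are separated by a gap $j-n\ge1$ and hypothesis (III) applies. For $j\ge n+p$, (III) in the form of the remark above recording its consequence for $A'$ gives $\mu_{\theta^i\omega}(A'\cap\sigma^{-j}A)\le(1+\psi(j-n-p))\mu_{\theta^i\omega}(A')\mu_{\theta^j\omega}(A)\le(1+\psi(j-n-p))c_1e^{-h_1n}\mu_{\theta^i\omega}(A')$ by \eqref{measurecyl}; summing over $j$ costs a factor $\le g+\sum_{\ell\ge1}\psi(\ell)\le 2g$ for $n$ large (since $\psi$ is summable), and summing over $i\le k_n$ costs $M_n$, which is $\le t$ $\PP$-a.s.\ by Lemma~\ref{lem:MN}. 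The $\le p-1$ remaining indices $n<j<n+p$ are bounded crudely, using $A'\subset A$ and (III) with gap $j-n$, by $\lesssim c_1e^{-h_1n}\mu_{\theta^i\omega}(A)$, and $\sum_{i\le k_n}\mu_{\theta^i\omega}(A)$ is $\PP$-a.s.\ bounded by a multiple of $t$ via the same second-moment estimate as in the proof of Lemma~\ref{lem:MN} (which applies verbatim with $A\in\F_0^n$ in place of $A'$). Altogether $S_2(\omega)\lesssim g\,e^{-h_1n}=e^{-h_1n/2}\to0$ $\PP$-a.s.

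\emph{The term $S_1$.} If $z$ is not periodic then $A'=A$ and $S_1\to0$ $\PP$-a.s.\ is exactly Lemma~\ref{lem:v short rets}. Now suppose $z$ is $p$-periodic. A Fine--Wilf-type argument on the word $z_0z_1\cdots$ (in the spirit of Lemma~\ref{lem:sparse early returns}) shows that for $n$ large no point of $A'=C_n(z)\sm C_{n+p}(z)$ can return to $A=C_n(z)$ at a time $j\le n-p$: for such $j$, agreement of the returning point with $z$ on $[0,n-1]$ and on $[j,j+n-1]$ forces $z$ to have period $j$ on a block long enough that $p\mid j$, and then the same two agreements place the point in $C_{n+j}(z)\subset C_{n+p}(z)$, contradicting $y\in A'$. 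Hence only the $\le p$ times $j\in\{n-p+1,\dots,n\}$ can contribute, and there we bound $\mu_{\theta^i\omega}(A'\cap\sigma^{-j}A)\le\mu_{\theta^i\omega}(C_n(z)\cap\sigma^{-j}C_n(z))$ and apply \eqref{measuresumcyl} with $m=n-p+1$:
\[
S_1(\omega)\ \le\ c_2\,e^{-h_1(n-p+1)}\sum_{i=1}^{k_n}\mu_{\theta^i\omega}(C_n(z))\ \lesssim\ t\,e^{-h_1n}\ \to\ 0,
\]
$\PP$-a.s., again using the $\PP$-a.s.\ bound on $\sum_{i\le k_n}\mu_{\theta^i\omega}(C_n(z))$.

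Combining the two estimates gives $G_n\to0$ $\PP$-almost surely. All the randomness enters only through the a.s.\ boundedness of the sums $\sum_{i\le k_n}\mu_{\theta^i\omega}(A')$ and $\sum_{i\le k_n}\mu_{\theta^i\omega}(A)$ supplied by Lemma~\ref{lem:MN} and its proof; everything else is deterministic decay in $n$. The main obstacle is the bookkeeping for $S_1$ in the periodic case: one must pin down exactly which early return times survive the periodicity constraint, notice that the few that do are still of order $n$ rather than $O(1)$, and then call on the cylinder-return estimate \eqref{measuresumcyl} to kill them. This is the single place where the periodic/non-periodic dichotomy does genuine work.
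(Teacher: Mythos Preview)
Your proof is correct and follows the same architecture as the paper's: split $G_n$ into short returns ($j\le n$) and long returns ($j>n$), kill the long part with (III) together with \eqref{measurecyl} and the almost-sure bound from Lemma~\ref{lem:MN}, and in the non-periodic case invoke Lemma~\ref{lem:v short rets} for the short part.

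Your treatment of $S_1$ in the periodic case is in fact more careful than the paper's. The paper asserts flatly that a point of $A'$ cannot return to $A$ before time $|A|$ and so declares the entire short-return sum to vanish; you instead prove via a Fine--Wilf argument that returns at times $j\le n-p$ are impossible and then bound the residual window $n-p<j\le n$ using \eqref{measuresumcyl}. This extra step is not cosmetic: the blanket vanishing can fail in that window. For instance with $z=(0102)^\infty$, $p=4$, $n=7$, $j=6$, the point $y$ determined by $y_0\ldots y_{12}=0102010102010$ lies in $A'=C_7(z)\sm C_{11}(z)$ (since $y_7=1\neq 2=z_7$) yet $\sigma^6 y\in C_7(z)$. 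Your appeal to \eqref{measuresumcyl} with $m=n-p+1$ disposes of these at most $p$ surviving terms and closes the gap; the final estimate $S_1\lesssim t\,e^{-h_1 n}$ is unaffected. One small point: your bounds for $S_1$ and for the indices $n<j<n+p$ in $S_2$ require $\sum_{i\le k_n}\mu_{\theta^i\omega}(A)$ (not $A'$) to be almost surely bounded, and you correctly note that the variance computation in the proof of Lemma~\ref{lem:MN} applies verbatim with $A$ in place of $A'$ to give this.
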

\begin{proof}
We assume here that $z$ is $p$-periodic since the complementary case follows immediately from this argument (with $A'=A$) and Lemma~\ref{lem:v short rets}.
Using the hypothesis on the decay of correlations, 
\begin{eqnarray*}
G_n &=& \sum_{i=1}^{k_n} \mu_{\theta^i\omega}(A'\cap \{ \tau_A(\cdot)\le g\})\\
&\leq &\sum_{i=1}^{k_n} \sum_{j=\vert A \vert}^g\mu_{\theta^i\omega}(A'\cap\sigma^{-j}A)\\
&\leq& \sum_{i=1}^{k_n}\left(\sum_{j=\vert A \vert}^g \mu_{\theta^i\omega}(A')\mu_{\theta^{i+j}\omega}(A)+ \psi(j-\vert A\vert)\mu_{\theta^i\omega}(A')\mu_{\theta^{i+j}\omega}(A)\right)\\
&\leq& c_1(1+ \psi(0))g e^{-h_1 n}\sum_{i=1}^{k_n}\mu_{\theta^i\omega}(A')
\end{eqnarray*}
Since by Lemma~\ref{lem:MN} $\limsup_n\sum_{i=1}^{k_n}\mu_{\theta^i\omega}(A')\le t$, $\PP$-almost surely, the lemma is proved. 
\end{proof}

Combining the lemmas in this section completes the proof of Theorem~\ref{thm:main1}.  Corollary~\ref{cor:main1} follows immediately by integrating over $\omega$.

\subsection{Infinite alphabets}
This subsection is devoted to the proof of Theorem~\ref{theoinfalph}.

The difference between Theorem~\ref{thprin} and Theorem~\ref{theoinfalph} is that we do not assume (II) and thus allow infinite alphabet: this means that we lose some control of $k_n$, which we compensate for in the strengthened mixing assumption in (I') compared to (I). In the proof of Theorem~\ref{thprin}, assumption (II) is only used in the proof of Lemma~\ref{lem:MN} and in particular to prove that $\sum_n\var(M_n)<\infty$. Under the assumptions of Theorem~\ref{theoinfalph}, we replace \eqref{eqfardiagmn} with:

\begin{align*} 
\sum_{|i-j|\ge m}^{k_n}\int_{\Omega} \mu_{\theta^i\omega}(A')\mu_{\theta^j\omega}(A')d\PP(\omega)
&\le
2\sum_{j\ge i+m}^{k_n}\int_{\Omega}  \psi(m-n-p)\mu_{\theta^i\omega}(A')\mu_{\theta^j\omega}(A')d\PP(\omega) \\
&\quad + \int_\Omega\mu_{\theta^i\omega}(A'\cap\sigma^{-(j-i)}A')d\PP(\omega)\\
&\hspace{-20mm}\le 
2  k_n \psi(m-n-p)\mu(A') + 2 \sum_{j\ge i+m}^{k_n} \mu(A'\cap\sigma^{-(j-i)}A') \\
&\hspace{-20mm}\le
2   k_n\psi(m-n-p)\mu(A') + k_n^2\mu(A')^2 (1+   \psi(m-n-p)).
\end{align*} 
 Similarly to \eqref{eq:varmn}, we obtain:
\[
\var M_n \le 
2c_1 t m e^{-h_1n} +
2   k_n\psi(m-n-p)\mu(A') +   k_n^2 \psi(m-n-p)\mu(A')^2. 
\]
Thus, $\sum_n\var(M_n)<\infty$ since $k_n\mu(A')\leq t$ and the conclusions of Lemma~\ref{lem:MN} are satisfied. This completes the proof of Theorem~\ref{theoinfalph}.
\section{Existence of the extremal index}
\label{sec:Theta ex}

Here we address the issue of the existence of the limit $\Theta=\lim_{n\to \infty}\frac{\mu\left(C_n(z)\sm C_{n+p}(z)\right)}{\mu(C_n(z))}$, which follows from the existence of $\lim_{n\to \infty}\frac{\mu\left(C_{n+p}(z)\right)}{\mu(C_n(z))}$.
Notice that we are considering the measure $\mu$, rather than the sample measures $\mu_\omega$ here.  However, since we integrate over the $\mu_\omega$ to get $\mu$, we need only consider the general properties of the sample measures, while for ease of notation we will suppress this difference.

For a dynamical system $f:X\to X$, a measure $m$ on $X$ is called $\phi$-conformal for some observable $\phi:X\to \R$, if whenever a measurable set $A$ is such that $f:A\to f(A)$ is a bijection, then
$$m(f(A))=\int_Ae^{-\phi}~dm.$$

In the cases we will deal with below we will have a continuous function $\phi$ along with a $\phi$-conformal measure $\nu$ and an invariant measure $\mu\ll \nu$.  If, as in our later examples, we also have a finite density $\frac{d\mu}{d\nu}$ at $z$, then 
$$\lim_{n\to \infty}\frac{\mu\left(C_{n+p}(z)\right)}{\mu(C_n(z))}= \lim_{n\to \infty}\frac{\nu\left(C_{n+p}(z)\right)}{\nu(C_n(z))}.
$$
But since our systems are Markov, we have that $f^p:C_{n+p}(z)\to C_n(z)$ is a bijection, so by conformality of $\nu$ 
$$\nu(C_n)=\int_{C_{n+p}}e^{-S_p\phi}~d\nu$$
which by continuity of $\phi$ converges to $\nu(C_{n+p})e^{-S_p\phi(z)}$ as $n\to \infty$.
Incorporating randomness into this calculation and then integrating yields the same result, so the limit we require does indeed exist.

\section{Random Gibbs measures}
\label{sec:examples}

In this section we will give details of a family of shifts which satisfy our assumptions. 
The simplest non-trivial examples of random dynamical systems to which our results apply are given in \cite[Example 19]{RouSauVar13}: the dynamics on the base $\theta:\Omega\to \Omega$ is a subshift of finite type (SFT) equipped with an invariant probability measure which is a Gibbs measure for a H\"older potential, while on the fibres we have full shifts with Bernoulli measures.  Here we generalise this setting.  To prove (III) we can deal with fairly general shifts on the fibres and we will describe these shifts in some detail.  However, in order to ensure that (I) holds, it is necessary to restrict our combination of base and fibre transformations somewhat. Note that the existence of the limit $\Theta$ follows as in the previous section.

\subsection{The fibre maps and condition (III)}

To guarantee (III) the principal example here is a random shift on an at most countable alphabet, and the corresponding random Gibbs measures.  For generality, we will use the approach detailed in \cite{Sta13} which is concerned with shifts on $\N$, for example the full shift.  We note that this extends a little beyond the full shift, to the so-called BIP setting.  To obtain condition (I), it is most convenient to restrict ourselves to the finite shift case, but we note that this also extends to the countable case if $(\Omega,\p, \theta)$ is sufficiently simple.

As above, we assume that $(\Omega, \p, \theta)$ is an invertible measure preserving system and let $X=\N^{\N_0}$ and let $\sigma:X\to X$ denote the shift (so far we won't assume anything on the structure of the shift spaces).  For $r\in (0,1)$, let $d_r$ be the usual symbolic metric on $X$, i.e., $d_r(x, y)=r^k$ where $x_i=y_i$ for $i=0, \ldots, k-1$, but $x_k\neq y_k$.

Assume that $\phi:X\times \Omega:\to \R$ is a function which is almost surely H\"older continuous, which is to say, for $$V_n^\omega(\phi):=\sup\{|\phi_\omega(x)-\phi_\omega(y)|:x_i=y_i,\ i=0,\ldots, n-1\},$$
there is some $r\in (0,1)$ and $\kappa(\omega)\ge 0$ such that $\int\log\kappa~d\p<\infty$ where $V_n^\omega(\phi)\le \kappa(\omega)r^n$.

Define $S_n\phi_\omega(x):=\sum_{k=0}^{n-1}\phi_{\theta^{k}\omega}\circ\sigma^k(x)$.  If $x,y$ are in the same $m$-cylinder for $m\ge n$, then $|S_n\phi_\omega(x)-S_n\phi_\omega(y)|\le r^{m-n}\sum_{k=0}^{n-1}r^k\kappa(\theta^{n-k}\omega)$.  As in the proof of  \cite[Lemma 7.2]{DenKifSta08-2}, the assumption on the integrability of $\log\kappa$ implies that the above limit is finite a.s., say $\sum_{k=0}^{n-1}r^k\kappa(\theta^{n-k}\omega) \le c_\omega$.  However, it is also pointed out in \cite{Sta13} that if $\kappa$ is integrable, then we have an a.s. uniform upper bound, say $C_\phi$ on $\sum_{k=0}^{n-1}r^k\kappa(\theta^{n-k}\omega)$.  Given a H\"older function $\psi$, then we define
$$D_\omega(\psi):=\sup_{x,y\in X_\omega}\left\{\frac{|\psi(x)-\psi(y)|}{d_r(x, y)}\right\}.$$

Now we define the random Ruelle operator by
$$\L_\omega\psi(x)=\sum_{\sigma y=x}e^{\phi_\omega(y)}\psi_\omega(y)$$
where $\psi:X'\to [0,\infty]$ where $X'\subset X$ is such that $\L_\omega$ is well-defined.  It can be shown that there exists some constant $\lambda_\omega$ which is the maximal eigenvalue for $\L_\omega 1$.  As in \cite{DenKifSta08, Sta13}, we can assume that there exists $\rho_\omega$ which is uniformly bounded from below and such that  $\L_\omega\rho_\omega=\lambda_\omega\rho_{\theta\omega}$  a.s.  and such that $\log\rho$ satisfies the same smoothness properties as $\phi$, i.e. we have the same $\kappa$ and $r$ in the variation.  This allows us to replace $\phi$ with
$$\varphi_\omega(x) :=\phi_\omega(x)+\log \rho_\omega-\log\rho_{\theta\omega}(\sigma x)-\log\lambda_\omega.$$
Letting $\L_\omega$ denote the corresponding transfer operator, one consequence of this is that $\L_\omega 1=1$.  Note also that random equilibrium states for $\phi$ and $\varphi$ coincide.

 Now we have the property that
 \begin{equation}
 \int\L_\omega^n(\psi)\cdot \gamma~d\mu_\omega=\int\psi\cdot \gamma\circ \sigma^n~d\mu_{\theta^{-n}\omega}\label{eq:L basic}
 \end{equation}
for appropriate observables $\psi, \gamma$.

We will make the following almost sure assumptions on our system (which are easily satisfied for SFTs with H\"older potentials):

\begin{enumerate}

\item $\int\kappa~d\p<\infty$, so $\sum_{k=0}^\infty r^k\kappa(\theta^{n-k}\omega)$ is a.s. uniformly bounded, independently of $\omega$.

\item There exists a measure $\mu_\omega$ where $\L_\omega^*\mu_\omega=\mu_{\theta^{-1}\omega}$, i.e., \eqref{eq:L basic} holds for $L^1$ observables. 

\item Big images:  there exists some $C_{BIP}>0$ such that for any $n$-cylinder $U$ and $\omega\in \Omega$, $\inf1/{\mu_{\theta^{n}\omega}(\sigma^{n}U)}>C_{BIP}$.

\item There exist $C>0$, and $g(n)\to 0$ as $n\to \infty$ such that 
$$\left\|\L_\omega^n(\psi)-\int\psi~d\mu_{\omega}\right\|_\infty\le Cg(n)D_\omega(\psi).$$
\end{enumerate}

Given an $n$-cylinder $A\subset X_\omega$, let $\hat\sigma_{\omega, A}:\sigma^nA\to A$ denote the local inverse of $\sigma^n$. 
We use the previous conditions to obtain the decay of correlations for the sample measures:

\begin{proposition}
Under the above conditions there exists $C>0$ such that
if $A$ is an $n$-cylinder and $B$ an $m$-cylinder 
$$\left|\mu_\omega(A\cap \sigma_{\omega}^{-j-n} B)-\mu_\omega(A)\mu_{\theta^{n+j}\omega}(B)\right|\le Cg(j)\mu_\omega(A)\mu_{\theta^{n+j}\omega}(B).$$
\end{proposition}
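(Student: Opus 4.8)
The plan is to express the difference $\mu_\omega(A\cap \sigma_\omega^{-j-n}B)-\mu_\omega(A)\mu_{\theta^{n+j}\omega}(B)$ in terms of the transfer operator using the duality \eqref{eq:L basic}, and then to exploit the normalisation $\L_\omega 1 = 1$ together with the decay estimate in condition (4). First I would write $\mu_\omega(A\cap \sigma_\omega^{-j-n}B)=\int 1_A\cdot (1_B\circ\sigma^{n+j})\,d\mu_\omega$, and push the indicator $1_A$ through $n$ applications of the transfer operator: since $A$ is an $n$-cylinder with local inverse branch $\hat\sigma_{\omega,A}:\sigma^n A\to A$, one has $\L_{\theta^{n-1}\omega}\circ\cdots\circ\L_\omega(1_A\cdot h)= 1_{\sigma^n A}\cdot (e^{S_n\varphi_\omega}\cdot h)\circ\hat\sigma_{\omega,A}$ for suitable $h$. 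Applying \eqref{eq:L basic} $n$ times,
\[
\mu_\omega(A\cap\sigma_\omega^{-j-n}B)=\int 1_{\sigma^n A}\cdot \big(e^{S_n\varphi_\omega}\circ\hat\sigma_{\omega,A}\big)\cdot (1_B\circ\sigma^{j})\,d\mu_{\theta^n\omega}.
\]
Now I would apply \eqref{eq:L basic} a further $j$ times to move the factor $1_{\sigma^n A}\cdot(e^{S_n\varphi_\omega}\circ\hat\sigma_{\omega,A})$ past $\sigma^j$, obtaining $\int \L_{\theta^n\omega}^{j}\big(1_{\sigma^n A}\cdot e^{S_n\varphi_\omega}\circ\hat\sigma_{\omega,A}\big)\cdot 1_B\,d\mu_{\theta^{n+j}\omega}$.

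The key step is then to compare $\L_{\theta^n\omega}^{j}(\psi)$, with $\psi:=1_{\sigma^n A}\cdot e^{S_n\varphi_\omega}\circ\hat\sigma_{\omega,A}$, against its integral $\int\psi\,d\mu_{\theta^n\omega}=\mu_\omega(A)$ (the latter equality again from \eqref{eq:L basic}). Condition (4) gives $\big\|\L_{\theta^n\omega}^{j}(\psi)-\mu_\omega(A)\big\|_\infty\le Cg(j)D_{\theta^n\omega}(\psi)$, so integrating against $1_B\,d\mu_{\theta^{n+j}\omega}$ yields
\[
\left|\mu_\omega(A\cap\sigma_\omega^{-j-n}B)-\mu_\omega(A)\mu_{\theta^{n+j}\omega}(B)\right|\le Cg(j)D_{\theta^n\omega}(\psi)\,\mu_{\theta^{n+j}\omega}(B).
\]
It therefore remains to show $D_{\theta^n\omega}(\psi)\le C'\mu_\omega(A)$ for a uniform constant $C'$; combined with the above this gives the claim (absorbing constants).

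The main obstacle — and where the smoothness and big-images hypotheses are used — is the bound on $D_{\theta^n\omega}(\psi)$. On $\sigma^n A$ the function $\psi$ equals $e^{S_n\varphi_\omega}\circ\hat\sigma_{\omega,A}$; its Lipschitz constant with respect to $d_r$ is controlled by the distortion of $S_n\varphi_\omega$ along the branch, i.e. by $\sum_{k=0}^{n-1}r^k\kappa(\theta^{n-k}\omega)$, which by condition (1) (and the uniform-bound remark derived from $\log\rho$ having the same variation as $\phi$) is bounded by a uniform constant $C_\varphi$. This gives bounded distortion: $\sup_{x,y\in\sigma^n A}|S_n\varphi_\omega(\hat\sigma_{\omega,A}x)-S_n\varphi_\omega(\hat\sigma_{\omega,A}y)|\le C_\varphi$, hence $\sup_{\sigma^n A}\psi\le e^{C_\varphi}\inf_{\sigma^n A}\psi$ and $D_{\theta^n\omega}(\psi)\le C_\varphi' \sup_{\sigma^n A}\psi$. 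Then $\sup_{\sigma^n A}\psi\le e^{C_\varphi}\,\mu_{\theta^n\omega}(\sigma^n A)^{-1}\int_{\sigma^n A}\psi\,d\mu_{\theta^n\omega}=e^{C_\varphi}\,\mu_{\theta^n\omega}(\sigma^n A)^{-1}\mu_\omega(A)$, and the big-images condition (3), $\mu_{\theta^n\omega}(\sigma^n A)^{-1}< C_{BIP}^{-1}$... more precisely $\mu_{\theta^n\omega}(\sigma^n A)\ge C_{BIP}^{-1}$ wait — condition (3) states $\inf 1/\mu_{\theta^n\omega}(\sigma^n U)>C_{BIP}$, i.e. $\mu_{\theta^n\omega}(\sigma^n A)<1/C_{BIP}$, which bounds $\psi$ from the other side; one uses instead that $\sigma^n A$ is a union of finitely many $1$-cylinders (or all of $X_{\theta^n\omega}$ in the full-shift/BIP case) whose measure is bounded \emph{below}, so $\sup\psi\le C''\mu_\omega(A)$. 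Chaining these estimates yields $D_{\theta^n\omega}(\psi)\le C'\mu_\omega(A)$ with $C'$ independent of $\omega, n, A$, completing the proof. I would also remark that nothing special about $B$ being a cylinder is used beyond measurability, so the estimate holds for any measurable $B$, which is exactly the form of (III) needed in Section~\ref{sec-proof}.
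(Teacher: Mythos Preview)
Your argument is essentially identical to the paper's: both push $1_A$ through $n$ steps of the transfer operator to obtain $\psi=(e^{S_n\varphi_\omega})\circ\hat\sigma_{\omega,A}$ on $\sigma^n A$, apply condition~(4) to $\L_{\theta^n\omega}^{j}\psi$, and then bound $D_{\theta^n\omega}(\psi)\le C\,\mu_\omega(A)$ via bounded distortion plus big images; the paper also splits the Lipschitz estimate into the cases $x,y\in\sigma^n A$ and $x\in\sigma^n A,\ y\notin\sigma^n A$, the second being harmless since $\sigma^n A$ is a union of $1$-cylinders, so $d_r(x,y)=1$ there.

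Your hesitation over condition~(3) is justified but not a defect in your argument: as literally written, $\inf 1/\mu_{\theta^n\omega}(\sigma^n U)>C_{BIP}$ gives only the trivial bound $\mu_{\theta^n\omega}(\sigma^n U)<1/C_{BIP}$, whereas what is needed (and what the paper itself uses in its proof, writing $1/\mu_{\theta^n\omega}(\sigma^n A)\le C_{BIP}$) is a uniform \emph{lower} bound on the measure of images. This is the standard meaning of ``big images'' and is precisely the interpretation you land on; the inequality in~(3) is evidently misstated.
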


\begin{proof}
We first relate the problem to the behaviour of the transfer operator as follows.  

By \eqref{eq:L basic}, 
\begin{align*}
\mu_\omega\left(A \cap(\sigma^{-k-n}B)\right)= \int 1_A\cdot 1_B\circ \sigma^{k+n}~d\mu_\omega
=\int \L_{\theta^n\omega}^{k}\left(\L^n1_A\right)\cdot 1_B~d\mu_{\theta^{n+k}\omega}.
\end{align*}

So noticing that since $A$ is an $n$-cylinder 
$$\left(\L^n1_A\right)(x) =\sum_{\sigma^ny=x}e^{S_n\varphi_\omega(y)}1_A(y)=\left((e^{S_n\varphi_\omega})\circ\hat\sigma_{\omega, A}\right)(x),$$
we obtain
\begin{align*}
\mu_\omega\left(A \cap(\sigma^{-k-n}B)\right) =\int\L_{\theta^n\omega}^k\left((e^{S_n\varphi_\omega})\circ\hat\sigma_{\omega, A}\right)\cdot 1_B~d\mu_{\theta^{n+k}\omega}.
\end{align*}

Since similarly 
$$\mu_\omega(A)=\int\L_\omega^n 1_A~d\mu_{\theta^n\omega} =\int \left(e^{S_n\varphi_\omega}\right)\circ\hat\sigma_{\omega, A}~d\mu_{\theta^n\omega}=\int_{\sigma^nA} \left(e^{S_n\varphi_\omega}\right)\circ\hat\sigma_{\omega, A}~d\mu_{\theta^n\omega},$$
putting this into the main estimate here, we obtain by (4) that

\begin{align*}
&\left|\mu_\omega( A\cap \sigma_{\omega}^{-k-n} B)-\mu_\omega( A)\mu_{\theta^{n+k}\omega}(B)\right| \\
& \le\int\left|\L_{\theta^n\omega}^k\left((e^{S_n\varphi_\omega})\circ\hat\sigma_{\omega, A}\right)\cdot 1_B -\mu_\omega(A)\cdot 1_B\right|~d\mu_{\theta^{n+k}\omega}\\
&\le \mu_{\theta^{n+k}\omega}(B)\left\|\L_{\theta^n\omega}^k\left((e^{S_n\varphi_\omega})\circ\hat\sigma_{\omega, A}\right)-\mu_\omega( A)\right\|_\infty\\
&= \mu_{\theta^{n+k}\omega}(B)\left\|\L_{\theta^n\omega}^k\left((e^{S_n\varphi_\omega})\circ\hat\sigma_{\omega, A}\right)-\int \left(e^{S_n\varphi_\omega}\right)\circ\hat\sigma_{\omega, A}~d\mu_{\theta^n\omega}\right\|_\infty\\
&\le \mu_{\theta^{n+k}\omega}(B)Cg(k)D_{\theta^n\omega}\left((e^{S_n\varphi_\omega})\circ\hat\sigma_{\omega, A}\right).
\end{align*}
To estimate $D_{\theta^n\omega}\left((e^{S_n\varphi_\omega})\circ\hat\sigma_{\omega, A}\right)$, if $x, y\in \sigma^n( A)$, then 
\begin{align*}
&\left|\left((e^{S_n\varphi_\omega})\circ\hat\sigma_{\omega, A}\right)(x)-\left((e^{S_n\varphi_\omega})\circ\hat\sigma_{\omega, A}\right)(y)\right|\\
&\hspace{2cm}\le e^{S_n\varphi_\omega\circ\hat\sigma_{\omega, A}(x)}\cdot\left|1-\exp\left(S_n\varphi_\omega \circ\hat\sigma_{\omega, A}(y)- S_n\varphi_\omega \circ\hat\sigma_{\omega, A}(x)\right)\right|\\
&\hspace{2cm}\le \tilde{C}\frac{\mu_{\omega}(A)}{\mu_{\theta^n\omega}(\sigma^nA)}C'C_\varphi d_r(x, y)\\
&\hspace{2cm}\le \tilde{C}C_{BIP}C_\varphi C'\mu_\omega(A)d_r(x, y),
\end{align*}
where the $C'$ is such that $|1-e^x|\le C'|x|$ for small enough $|x|$ and $\tilde{C}=\max\{1,e^{C_\varphi}\}$.  So the proof is complete in the case that $x, y\in \sigma^n(A)$.

In the case that $x\in \sigma^n( A)$ and $y\notin \sigma^n( A)$, we see that 
$$
\left|\left((e^{S_n\varphi_\omega})\circ\hat\sigma_{\omega, A}\right)(x)-\left((e^{S_n\varphi_\omega})\circ\hat\sigma_{\omega, A}\right)(y)\right| = e^{S_n\varphi_\omega\circ\hat\sigma_{\omega, A}(x)},$$
which we can estimate as above.
\end{proof}

\subsection{Decay for the full system}

We next prove (I).  If our fibres are SFTs on a finite alphabet then we can prove the following:

\begin{lemma}  
For any $\alpha\in (0,1]$ there exist $C_\alpha>0$ and $\alpha'\in (0,1]$ such that for an $n$-cylinder $A$, the map $\omega\mapsto \mu_\omega(A)$ is $(C_\alpha r^{-\alpha n}, \alpha')$-H\"older.
 \label{lem:Holder}
\end{lemma}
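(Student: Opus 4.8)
\emph{Proof strategy.} Fix $\alpha\in(0,1]$. Take $\omega,\omega'$ in a fixed set of full $\PP$-measure with $d(\omega,\omega')\le r^N$, so that $d(\theta^k\omega,\theta^k\omega')\le r^{N-k}$ for $0\le k\le N$; the target is $|\mu_\omega(A)-\mu_{\omega'}(A)|\le C_\alpha r^{-\alpha n}(r^N)^{\alpha'}$ for the $n$-cylinder $A=C_n(z)$ and a small exponent $\alpha'$ to be fixed at the end. We work from the transfer-operator representation $\mu_\omega(A)=\int\L_\omega^n 1_A\,d\mu_{\theta^n\omega}$, with $\L_\omega^n 1_A=(e^{S_n\varphi_\omega})\circ\hat\sigma_{\omega,A}$ on $\sigma^nA$ and $0$ off it, exactly as in the proof of the proposition above, together with the crude bound $\mu_\omega(A)\le c_1e^{-h_1n}$ from \eqref{measurecyl}; writing $e^{-h_1}=r^{\beta_0}$ with $\beta_0:=h_1/\log(1/r)>0$ this is $\mu_\omega(A)\le c_1r^{\beta_0n}$.

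\emph{Step 1 (dichotomy in $n$ versus $N$).} If $n\ge\frac{\alpha'}{\beta_0+\alpha}N$ the crude bound alone suffices, since $|\mu_\omega(A)-\mu_{\omega'}(A)|\le 2c_1r^{\beta_0n}\le 2c_1r^{-\alpha n}r^{\alpha'N}$, the hypothesis on $n$ being exactly $\beta_0n\ge-\alpha n+\alpha'N$ (recall $r<1$). Hence we may assume $n<\frac{\alpha'}{\beta_0+\alpha}N$, and on imposing $\alpha'\le\frac{\beta_0+\alpha}{2}$ even $n<N/2$; then $d(\theta^k\omega,\theta^k\omega')\le r^{N-k}\le r^{N/2}$ for all $0\le k\le n$.

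\emph{Step 2 (main estimate, $n<N/2$).} Write
$$\mu_\omega(A)-\mu_{\omega'}(A)=\int\bigl(\L_\omega^n 1_A-\L_{\omega'}^n 1_A\bigr)\,d\mu_{\theta^n\omega}+\int\L_{\omega'}^n 1_A\,d\bigl(\mu_{\theta^n\omega}-\mu_{\theta^n\omega'}\bigr),$$
the mild mismatch between $X_{\theta^n\omega}$ and $X_{\theta^n\omega'}$ (which agree to depth $N-n>N/2$, while $\|\L_{\omega'}^n 1_A\|_\infty\lesssim e^{-h_1n}$) being absorbed by routine estimates. For the first integral, $\hat\sigma_{\omega,A}=\hat\sigma_{\omega',A}$ as maps (both prepend $z_0\dots z_{n-1}$), so its integrand equals $|e^{S_n\varphi_\omega(y)}-e^{S_n\varphi_{\omega'}(y)}|$ with $y\in A$, and since the normalised potentials are $\le 0$ both exponentials lie in $(0,1]$, giving at most $|S_n\varphi_\omega(y)-S_n\varphi_{\omega'}(y)|\le\sum_{k=0}^{n-1}\|\varphi_{\theta^k\omega}-\varphi_{\theta^k\omega'}\|_\infty$; the Hölder dependence of $\omega\mapsto\varphi_\omega$ on $\omega$ — equivalently of $\phi_\omega$, $\log\rho_\omega$ and $\log\lambda_\omega$, which is part of the random Ruelle--Perron--Frobenius theory of \cite{DenKifSta08, DenKifSta08-2, Sta13} — combined with $d(\theta^k\omega,\theta^k\omega')\le r^{N/2}$ makes this $\lesssim n\,r^{\beta N/2}\lesssim r^{\beta N/4}$ for the relevant exponent $\beta>0$, so the first integral is $\lesssim r^{\beta N/4}$. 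For the second integral, set $\psi=\L_{\omega'}^n 1_A$; bounded distortion and the big-images condition give $\|\psi\|_\infty+D_{\theta^n\omega'}(\psi)\lesssim\mu_{\omega'}(A)\lesssim e^{-h_1n}\le 1$ (as in the proof of the proposition above), and the weak-sense Hölder dependence of the sample measures $\eta\mapsto\mu_\eta$ on $\eta$ — again from that theory — bounds the second integral by $\lesssim\bigl(\|\psi\|_\infty+D_{\theta^n\omega'}(\psi)\bigr)\,d(\theta^n\omega,\theta^n\omega')^{\gamma}\lesssim r^{\gamma N/2}$ for the relevant exponent $\gamma>0$.

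\emph{Conclusion and main difficulty.} Since $r^{-\alpha n}\ge 1$, both integrals are $\le C\,r^{-\alpha n}r^{\alpha'N}$ as soon as $\alpha'$ is taken below $\beta/4$, $\gamma/2$ and $\tfrac{\beta_0+\alpha}{2}$ (and at most $1$); absorbing the constants into $C_\alpha$ finishes the proof, for $\PP$-a.e.\ $\omega,\omega'$. The crux is Step 2: one must quantify uniformly the $\omega$-regularity of every ingredient — the normalised potential $\varphi_\omega$, and above all the sample measures $\mu_\omega$ — which is precisely where the random transfer-operator machinery, and implicitly a joint-Hölder hypothesis on $\phi$, must be brought to bear. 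A secondary, purely bookkeeping, nuisance is comparing functions and measures living on the slightly different symbolic spaces $X_{\theta^n\omega}$ and $X_{\theta^n\omega'}$, and keeping the accumulated exponents in a shape that still produces exactly the claimed loss $r^{-\alpha n}$.
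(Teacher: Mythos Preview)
Your argument is structurally workable but vastly more elaborate than the paper's, and it ultimately leans on the very black box that would give the result in one line. The paper's proof is two sentences: the indicator $1_A$ of an $n$-cylinder is Lipschitz in the $d_r$-metric with constant $r^{-n}$, hence $(r^{-\alpha n},\alpha)$-H\"older for any $\alpha\in(0,1]$; then \cite[Theorem 2.10]{DenGor99} says directly that for a $(C,\alpha)$-H\"older observable $\psi$ the map $\omega\mapsto\int\psi\,d\mu_\omega$ is $(CD_\alpha,\eta(\alpha))$-H\"older, with $D_\alpha,\eta(\alpha)$ depending only on $\alpha$. Taking $\psi=1_A$ finishes the proof.

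Compare this with your Step~2: to control the second integral you invoke ``weak-sense H\"older dependence of the sample measures $\eta\mapsto\mu_\eta$'' tested against the H\"older function $\psi=\L_{\omega'}^n 1_A$. That weak-sense H\"older dependence \emph{is} precisely the Denker--Gordin theorem. Applying it to $1_A$ itself, rather than to $\L_{\omega'}^n 1_A$, gives the lemma immediately and renders your entire transfer-operator decomposition, the Step~1 dichotomy, and the extra hypothesis of H\"older dependence of $\omega\mapsto\varphi_\omega$ unnecessary. That last hypothesis is a genuine additional assumption that the paper never makes or needs; it is not obviously contained in the references \cite{DenKifSta08, DenKifSta08-2, Sta13} you cite (those papers are primarily concerned with existence and spectral properties, not joint $\omega$-regularity of the normalised potential), and the paper's route avoids it entirely by pushing all the $\omega$-regularity into the single black box \cite{DenGor99}. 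In short: your proof is not wrong in spirit, but it reinvents machinery around a result you are already invoking, and it imports an assumption the paper's argument does not require.
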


\begin{proof}
We need to show that the functions $\phi_A( x) = 1_A(x)$ are $(C_A, \alpha_A)$-H\"older.   Then \cite[Theorem 2.10]{DenGor99}  implies that the maps $\phi_A:\omega\mapsto \mu_\omega(A)$ are $(C_AD_{\alpha_A}, \eta(\alpha_A))$-H\"older for $D$ and $\eta$ depending only on $\alpha$.

In fact it is easy to see that $\phi_A$ is actually Lipschitz with Lipschitz constant $r^{-n}$, so for any $\alpha\in (0,1)$, it is $(r^{-n\alpha}, \alpha)$-H\"older.
\end{proof}

To see how this applies in the case where $\theta:\Omega\to \Omega$ is a SFT on a finite alphabet, with a Gibbs measure for a H\"older potential: for $\beta>0$ let the norm $\|\cdot\|_\beta$ be defined by $\|\cdot\|_\beta=|\cdot|_\beta+|\cdot|_\infty$ where $|g|_\beta=\sup\left\{\frac{V_n(g)}{\beta^n}:n\ge 0\right\}$.  So the classical SFT result, see   \cite[Proposition 2.4]{ParPol90} gives
 $$\left|\int v\circ \theta^n\cdot w~d\PP-\int v~d\PP\int w~d\PP\right| \le K\rho^n \|w\|_\beta |v|_\infty,$$
 for $v, w$ H\"older.
  So in our case, letting $\alpha'$ be as in  Lemma~\ref{lem:Holder}, choose $\beta\in (0,\alpha')$.  Then the norm for the characteristic function on the $n$-cylinder $A$ is less or equal to $C_\alpha r^{-\alpha n}$, we obtain
\begin{equation}
\left|\mu(A\cap\sigma^{-g-n}B) -\mu(A)\mu(B)\right|\le K\rho^{n+g} C_\alpha r^{-\alpha n}, \label{eq:base decay}
\end{equation}
 so if we ensure that $\alpha$ is chosen so that $r^\alpha>\rho$, we get an exponential decay like $\rho^g$ and so (I) holds.

Thus we have proved that if the fibre maps satisfy conditions (1)--(4) and the base transformation is an SFT on a finite alphabet with an invariant measure which is a Gibbs state for some H\"older potential, then Theorem~\ref{thm:main1} and Corollary~\ref{cor:main1} apply.  Naturally the base transformation could also be more complicated, so long as it satisfies something like \eqref{eq:base decay}.

\begin{remark}
Notice that if the base transformation is an SFT on a finite alphabet with an invariant measure which is a Gibbs state for some H\"older potential, and the fibre dynamics are countable Markov shifts satisfying (1)--(4) then we can recode the whole dynamical system $(\omega, x)\mapsto (\theta\omega, \sigma x)$ as a (non-random) countable Markov shift which will satisfy (1)--(4) and in fact satisfy (I').
\end{remark}

\textit{Acknowledgments.} The authors would like to thank Nicolai Haydn for several fruitful discussions and comments on a previous version of the paper.  They would also like to thank Manuel Stadlbauer and the referee for useful comments and suggestions.

\end{document}